\documentclass[12pt]{amsart}
\usepackage{amsfonts}
\usepackage{latexsym,amsmath,amsthm,amssymb,amsfonts,mathrsfs}
\usepackage{graphicx}
\usepackage[usenames, dvipsnames]{color}
\usepackage{tikz}
\usepackage{float}
\usepackage{ulem}


\numberwithin{equation}{section}

\oddsidemargin  0.15in

\evensidemargin 0.15in

\topmargin 0.in

\setlength{\textwidth}{6.25in}

\setlength{\textheight}{8.5in}

\newtheorem{thm}{Theorem}[section]
\newtheorem{cor}[thm]{Corollary}

\newtheorem{lem}[thm]{Lemma}
\newtheorem{prop}[thm]{Proposition}
\newtheorem{dfn}[thm]{Definition}
\theoremstyle{remark}

\begin{document}

\title[Compactness of HSL]{Compactness of Hamiltonian Stationary Lagrangian submanifolds in symplectic manifold}
\author{Jingyi Chen, John Man Shun Ma}

\address{Jingyi Chen \\
Department of Mathematics \\ 
The University of British Columbia \\
1984 Mathematics Rd, Vancouver, BC V6T 1Z2, Canada} 
\email{jychen@math.ubc.edu.ca}

\address{John Man Shun Ma \\ 
Department of Mathematical Sciences \\
University of Copenhagen\\
Universitetsparken 5\\
DK-2100 Copenhagen Ø, Denmark}
\email{jm@math.ku.dk}

\begin{abstract}
In this work, we prove a compactness theorem on the space of all Hamiltonian stationay Lagrangian submanifolds in a compact symplectic manifold with uniform bounds on area and total extrinsic curvature. This generalizes the compactness theorems in \cite{CW2} and \cite{CMa}.  
\end{abstract}

\date{}

\thanks{Chen is partially supported by an NSERC Discovery Grant (22R80062). Ma is partially supported by DFF Sapere Aude 7027-00110B, by CPH-GEOTOP-DNRF151 and by CF21-0680 from respectively the Independent Research Fund Denmark, the Danish National Research Foundation and the Carlsberg Foundation.}

\maketitle
\section{Introduction}
Let $(M, \omega, h, J)$ be a $2n$-dimensional symplectic manifold with a symplectic 2-form $\omega$, an almost complex structure $J$ and a compatible metric $h$. An immersed submanifold $L$ in $M$ is Lagrangian if $\dim L = n$ and $\omega|_L = 0$, and a Lagrangian immersion is Hamiltonian stationary (HSL) if it is a critical point of the volume functional among all Hamiltonian variations. The notion was first introduced in \cite{Oh1}, \cite{Oh2}. A compact and graded Lagrangian in a Calabi-Yau manifold is Hamiltonian stationary if and only if it is special \cite{TY}. Thus HSL submanifolds are natural generalization of special Lagrangians submanifolds for general symplectic manifolds. 

Examples of HSL submanifolds include the totally geodesic $\mathbb{RP}^n$ in $\mathbb{CP}^n$ and the flat tori
$\mathbb S^1(a_1) \times \cdots \times \mathbb S^1(a_n)$ in $\mathbb C^n$ \cite{Oh1}, \cite{Oh2}. In two dimension, Schoen and Wolfson constructed in \cite{SW} HSL surfaces with conical singularity in any K{\"a}hler surfaces, which are also area minimizer in its Lagrangian homology class. On the other hand, using perturbation methods, Joyce, Lee and Schoen proved in \cite{JLS} the existence of closed HSL submanifolds in every compact symplectic manifolds (see also the previous works \cite{BC,Lee} and a family version in \cite{LR}). A large classes of examples of HSLs are constructed using techniques in integrable system. We refer the readers to the bibliography of \cite{CMa} for a more comprehensive list of references.


 Using the regularity theory developed in \cite{CW1}, Chen and Warren obtain the smoothness estimates and small Willmore energy regularity in \cite{CW2} and prove a compactness theorem for HSL submanifolds in $\mathbb C^n$ with uniformly bounded areas and total extrinsic curvatures in $\mathbb C^n$. The regularity and compactness results in \cite{CW1, CW2} rely on the assumption that the ambient space is $\mathbb C^n$ since it is used, in an essential way, that the Lagrangian phase angle $\Theta$ can be written as $\arctan\lambda_1+\dots +\arctan\lambda_n$ for the graphic representation $(x,Du)$, where $\lambda_i$'s are the eigenvalues of $D^2u$. 

In another direction, using the bubble tree convergence for conformal mappings with uniformly bounded area and Willmore energy, we proved in \cite{CMa} a compactness theorem on the space of compact HSL surfaces with bounded area, genus and Willmore energy in a K{\"a}hler surfaces. A sequence of such HSL surfaces would converges smoothly away from finitely many points to a branched HSL immersion. Unlike the case for $\mathbb C^n$ where one make use of the Lagrangian angle, in \cite{CMa} we exploit the fact that the mean curvature one form of a HSL immersion in a K{\"a}hler manifold satisfies a first order elliptic system of "Hodge type"; this is also essential in the regularity results in \cite{SW} for a Lagrangian area minimizers in two dimensions. 

\vspace{.2cm}

Our main result is 

\begin{thm} \label{main thm}
Let $(M, \omega, h)$ be a compact $2n$-dimensional symplectic manifold, where $h$ is a Hermitian metric compatible with $\omega$. Let $C_V$, $C_A$ be positive constants and let $(L_k)_{k=1}^\infty$ be a sequence of connected compact Hamiltonian stationary Lagrangian immersion in $M$ so that 
\begin{align}  \label{eqn bounded area and norm of A}
\operatorname{Vol}(L_k) \le C_V, \ \  \ \| A_k\|_{L^n} \le C_A, \ \ \forall k\in \mathbb N.
\end{align} 
Then either $(L_k)$ converges to a point, or there is a finite set $S$ so that a subsequence $(L_{k_i})$ of $(L_k)$ converges smoothly locally graphically to a Hamiltonian stationary Lagrangian immersion $L$ on $M\setminus S$ and 
\begin{equation} \label{volume identity}
\operatorname{Vol} (L) =\lim_{i\to \infty} \operatorname{Vol}(L_{k_i}).
\end{equation}
Also, the closure $\overline L$ in $M$ is connected and admits a structure of a Lagrangian varifold, and is Hamiltonian stationary in the sense that the generalized mean curvature vector $\vec H$ of the varifold $(\overline L, \mu_L)$ satisfies 
\begin{align} \label{mu_L satisfies HSL}
\int_M h (J\nabla ^M f, \vec H) d\mu_L = 0,  \ \ \forall f\in C^\infty_c(M).
\end{align}
\end{thm}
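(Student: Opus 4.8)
The plan is to combine an $\varepsilon$-regularity theorem for Hamiltonian stationary immersions with a concentration-compactness argument, and then to pass stationarity to the varifold limit. The first and principal step is to establish an $\varepsilon$-regularity result in the general symplectic setting, where neither the Lagrangian angle formula of \cite{CW2} nor the Hodge-type system of \cite{CMa} is directly available: there is $\varepsilon_0 > 0$, depending only on $(M,\omega,h)$, so that if $L_k$ is Hamiltonian stationary and $\int_{L_k \cap B_r(p)} |A_k|^n \, d\mu_k < \varepsilon_0$, then $L_k \cap B_{r/2}(p)$ is a graph over its tangent plane with uniform $C^\infty$ bounds on the graph function. Granting this, the hypothesis $\|A_k\|_{L^n} \le C_A$ means the Radon measures $|A_k|^n \mu_k$ have mass at most $C_A^n$; after passing to a subsequence I may assume $\mu_k \rightharpoonup \mu_L$ and $|A_k|^n \mu_k \rightharpoonup \nu$ weakly-$*$, and I set $S := \{ p \in M : \nu(\{p\}) \ge \varepsilon_0 \}$. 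Since $\nu(M) \le C_A^n$, the set $S$ is finite, with $\# S \le C_A^n / \varepsilon_0$.

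Second, away from $S$ I upgrade to smooth convergence. For each $p \notin S$ there is a ball on which $\int |A_k|^n \, d\mu_k < \varepsilon_0$ for large $k$, so the $\varepsilon$-regularity theorem gives uniform local graphical $C^\infty$ bounds; Arzel\`a--Ascoli together with a diagonal argument over a compact exhaustion of $M \setminus S$ yields a subsequence converging in $C^\infty_{\mathrm{loc}}(M\setminus S)$ to a smooth immersion $L$. The Lagrangian condition $\omega|_{L_k}=0$ and the Hamiltonian stationary equation are closed under smooth convergence, so $L$ is Hamiltonian stationary on $M\setminus S$. The dichotomy in the statement is controlled by the diameter: using the Michael--Simon Sobolev inequality, a connected closed immersion with $\int|\vec H_k| \, d\mu_k$ and diameter both bounded below has volume bounded below, so after a further subsequence either $\operatorname{diam}_M(L_k) \to 0$, in which case $L_k$ collapses to a point, or $\operatorname{diam}_M(L_k) \ge \delta_0 > 0$ and the limit is nontrivial.

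Third, I produce the limiting varifold and verify its structure. Because $|\vec H_k| \le \sqrt{n}\,|A_k|$ pointwise, H\"older's inequality bounds the total first variation uniformly: $\|\delta V_k\|(W) = \int_W |\vec H_k| \, d\mu_k \le \sqrt{n}\, C_A\, \mu_k(W)^{(n-1)/n}$ on any precompact $W\subset M$. Allard's integral varifold compactness theorem then yields a subsequence with $V_k \to V=(\overline L,\mu_L)$, an integral varifold of locally bounded first variation; since $V$ is rectifiable and $\mathcal H^n(S)=0$, one has $\mu_L(S)=0$, and applying the same H\"older bound on shrinking balls about each $p\in S$ shows $\|\delta V\|(\{p\})=0$, so the first variation carries no singular part on $S$ and $V$ has a generalized mean curvature $\vec H\in L^1_{\mathrm{loc}}(\mu_L)$ agreeing with the smooth mean curvature of $L$ off $S$. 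The tangent planes of $V$, being limits of the Lagrangian tangent planes of $L_k$, lie in the closed Lagrangian Grassmannian, so $(\overline L,\mu_L)$ is a Lagrangian varifold. Testing $\mu_k\rightharpoonup\mu_L$ against the constant $1$ on the compact $M$ gives $\operatorname{Vol}(L_{k_i})\to\mu_L(M)=\mu_L(M\setminus S)=\operatorname{Vol}(L)$, which is \eqref{volume identity}, while connectedness of $\overline L$ follows from the fact that a Hausdorff limit of the connected compacta $L_k$ is connected, together with the identification of this limit set with $\overline L$.

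Finally, the Hamiltonian stationary condition \eqref{mu_L satisfies HSL} passes to the limit almost formally. For $f \in C^\infty_c(M)$ the Hamiltonian field $X_f = J\nabla^M f$ is a fixed smooth vector field, and weak-$*$ varifold convergence gives $\delta V_k(X_f) = \int \operatorname{div}_{T_xL_k} X_f \, d\mu_k \to \delta V(X_f)$, since $(x,P)\mapsto \operatorname{div}_P X_f(x)$ is continuous and compactly supported on the Grassmann bundle. Each $L_k$ being closed and Hamiltonian stationary forces $\delta V_k(X_f) = -\int_{L_k} h(X_f,\vec H_k)\,d\mu_k = 0$, so $\delta V(X_f)=0$; combined with the representation $\delta V(X) = -\int_M h(X,\vec H)\,d\mu_L$ from the previous step, this is exactly \eqref{mu_L satisfies HSL}. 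I expect the main obstacle to lie entirely in the first step, namely establishing the $\varepsilon$-regularity and the accompanying uniform higher-order estimates for the fourth-order Hamiltonian stationary system in a general symplectic manifold, where the special structure exploited in \cite{CW2} and \cite{CMa} is unavailable. The remaining delicate points are verifying that no volume concentrates on $S$ and that $\overline L$ is genuinely connected, both of which rely on the integral and density structure of the varifold limit.
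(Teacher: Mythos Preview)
Your outline is correct and follows the same high-level architecture as the paper (an $\varepsilon$-regularity theorem plus concentration--compactness plus a varifold limit), but the implementation differs in several places worth noting. The paper devotes Sections~3--4 to the step you grant: it derives the $\varepsilon$-regularity (Theorem~\ref{thm epsilon regularity}) by combining the Darboux coordinates with estimates of \cite{JLS} and the double-divergence regularity theory of \cite{BCW}, first obtaining a pointwise bound $\sigma|A|\le C_{\epsilon_0}$ and then, separately, the higher-order graphical estimates (Theorem~\ref{Theorem higher order estimates for u satisfying HSL}). For the concentration step the paper does \emph{not} pass to a weak-$*$ limit of $|A_k|^n\mu_k$; instead it runs a discrete covering argument at dyadic scales (Proposition~\ref{prop covering theorem for darboux balls}), flags at each scale the Darboux balls carrying curvature $\ge\varepsilon_0$, and defines $S$ as the accumulation set of the flagged centers. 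For the varifold structure and the Hamiltonian stationary condition on $\overline L$ the paper invokes the removability result \cite[Corollary~4.4]{CW2}, which needs $n\ge 2$ and forces a separate ODE treatment of the curve case $n=1$ (Section~\ref{pf of main thm n=1}); your route via Allard compactness and the direct passage $\delta V_k(X_f)\to\delta V(X_f)$ is dimension-uniform and sidesteps that split. Similarly, for \eqref{volume identity} the paper controls the volume near $S$ by an explicit upper density bound drawn from \cite[Proposition~4.1]{CW2}, whereas you get it from $\mu_k\rightharpoonup\mu_L$ tested against $1$ together with $\mu_L(S)=0$; both work, but note that your argument that $\|\delta V\|(\{p\})=0$ implicitly uses the same no-volume-concentration fact (via Portmanteau and rectifiability of $\mu_L$), so the two approaches are closer here than they first appear. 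In short: your measure-theoretic packaging of the soft steps is tidier and avoids the $n=1$ case split, while the paper keeps closer book on the graphical structure; the analytic substance in either approach resides entirely in the $\varepsilon$-regularity you have correctly identified as the crux.
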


This generalizes the compactness theorems in \cite{CW2}, \cite{CMa} to HSLs in any compact symplectic manifolds with metrics compacitble with the symplectic structures and the almost complex structures. The main step in proving compactness is to derive a local a-priori estimates for HSLs with small total curvature. In \cite{CW2}, \cite{CMa}, this is done by representating the Hamiltonian stationary conditions as a coupled elliptic systems of lower order. This simplification is not available for HSLs in a general symplectic manifold. Therefore, the methods in \cite{CW2}, \cite{CMa} do not directly yield a proof of Theorem \ref{main thm}.  In a Darboux coordinates, if one represents the HSL as a gradient graph $(x, Du(x))$ of a function $u$, then the Hamiltonian stationary condition is captured by a fourth order PDE on $u$. In a recent work \cite{BCW}, Bhattachaya, Chen and Warren have studied a fourth order elliptic PDE of double divergence form and derive a regularity results. Together with the constructions of Darboux coordinates with estimates in \cite{JLS} (See also subsecion \ref{subsection darboux with estimates}), it is proved that a $C^1$ HSL in a general symplectic manifolds is smooth. In this work, we apply the results in \cite{BCW} to derive a-priori estimates (Theorem \ref{Theorem higher order estimates for u satisfying HSL}) and a $\epsilon$-regularity result (Theorem \ref{thm epsilon regularity}) for HSL submanifolds, which is essential to the proof of Theorem \ref{main thm}.


\vspace{.2cm}

The paper is organized as follows. In section \ref{Background}, we discuss some background in Lagrangian submanifolds, which includes the definition of HSL and the Darboux coordinates with estimates constructed in \cite{JLS}. In section \ref{Estimates for double divergence equation}, we use the regularity results in \cite{BCW} to derive local $C^k$ estimates for functions which satisfy a fourth order elliptic equation in double divergence form. In section \ref{Local calculations (Symplectic)}, we prove a $\epsilon$-regularity theorem. Theorem \ref{main thm} is proved in sections \ref{pf of main thm n=1} and \ref{pf of main thm n>1}.

In the following, given any vector space $W$ with a metric $h$, we denote $|w|_{h} = \sqrt{h(w, w)}$ for $w\in W$. When $W$ is the Euclidean space, we write $|w| = |w|_{h_0}$, where $h_0$ is the Euclidean metric on $W$. Given any open subset $\Omega$ of $\mathbb R^n$ and any function $u : \Omega \to \mathbb R$ on $\Omega$, we use $\| u\|_{C^{k, \alpha} (\Omega)}$, $\| u\|_{L^p (\Omega)}$ and $\|u\|_{W^{k,p}(\Omega)}$ to denote respectively the $C^{k, \alpha}$, $L^p$ and $W^{k,p}$ norms of $u$. For any $k\in \mathbb N$, $x\in \mathbb R^k$ and $r>0$, the open ball with radius $r$ and center $x$ in $\mathbb R^k$ is denoted $B^k_r(x)$. We also write $B^k_r$ for $B^k_r (0)$.

\section{Background} \label{Background}
\subsection{Hamiltonian stationary Lagrangian submanifolds} Let $(M, h)$ be a Riemannian manifold. Unless otherwise specified, throughout this paper a submanifold of $M$ is an immersed submanifold, that is, a submanifold of $M$ is a pair $(L, \iota)$, where $L$ is a subset of $M$ and $\iota : N\to M$ is an immersion with $\iota(N)= L$. An immersed submanifold is compact (resp. connected), if $N$ is compact (resp. connected), and is proper if $\iota$ is a proper map.

Assume that $M$ is compact. By the Nash embedding theorem, there is an isometric embedding of $(M, h)$ into $\mathbb R^{K}$ for some positive integer $K$. Fixing such an embedding, the definition of varifolds on $M$ is given in \cite{Allard}. Let $U$ be an open subset of $M$, and let $L$ be a properly immersed $k$-dimensional submanifold of $U$ given by the immersion $\iota : N\to U$. Let $g = \iota^*h$ be the induced metric on $N$ and let $dV_g$ be the volume form. The volume of a submanifold $L$ is
\begin{equation} \label{eqn dfn of volume of L}
\operatorname{Vol}(L) = \int_N dV_g.
\end{equation}
Let $\pi_k: G_kM\to M$ be the Grassmann bundle on $M$, where each fiber at $x\in M$ is the Grassmann manifold of $k$-dimensional subspaces of $T_xM$. Given any $k$-dimensional immersed submanifold $L$ of $M$, let $G_\iota : N \to G_kM$ be the Gauss map given by $x\mapsto (\iota_*)_x T_xN$. Then $(L, \iota)$ is given a structure of a varifold $\mu_L$ by pushing forward: $|L| :=(G_\iota)_\sharp dV_g$, or 
\begin{equation} \label{eqn dfn of |L|}
    |L|(f) := \int_N f\circ G_\iota \ dV_g, \ \ \forall f\in C(G_kM).
\end{equation}
For any integral varifold $V$, the weight measure $\mu_V$ on $M$ is given by 
\begin{equation}  \label{eqn dfn of mu_L}
\int_M \phi d\mu_V := \int_{G_kM} \phi(x) dV, \ \ \forall \phi \in C(M). 
\end{equation}
Let $\mu_L:=mu_{|L|}$. Clearly $\mu_L$ and $|L|$ depend not only on $L$ but also on $\iota$. 

Let $(M, \omega, h, J)$ be a $2n$-dimensional symplectic manifold, where $\omega$ is the symplectic form, $J$ is an almost complex structure and $h$ is a Riemannian metric. We assume that $\omega, J, h$ satisfy the compatibility conditions
\begin{equation}\label{omega, h, J compatibility condition}
    h(X, Y) = \omega (X, JY), \ \ \ h(JX, JY) = h(X, Y) 
\end{equation}
for all tangent vectors $X, Y$ at the same point. 

An $n$-dimensional immersed submanifold $L$ in $M$ given by an immersion $\iota$ is called Lagrangian if $\iota^*\omega = 0$. Given an immersed Lagrangian submanifold $L$, (\ref{omega, h, J compatibility condition}) implies that $J$ maps tangent vectors of $L$ to normal vectors. Thus the second fundamental form of $L$ can be written as 
\begin{align} \label{formula for A}
A : T_pL  \times T_pL \times T_pL \to \mathbb R, \ \ \ A(X, Y, Z) = h(\overline \nabla_{X} \widetilde Y, JZ)= \omega (Z, \overline \nabla _X \widetilde Y),
\end{align}
where $\widetilde Y$ is any local extension of $Y$. The {\sl mean curvature one form} of $L$ is a one form on $N$ defined by 
\begin{equation} \label{eqn dfn mean curvature one form}
    \alpha_x (X) = \omega_{\iota(x)} (\iota_* X , \vec H(x))
\end{equation}
for any $x\in N$ and $X\in T_xN$. Here $\vec H$ is the mean curvature vector of the immersion $\iota : N \to M$.

Next we give the definition of a Hamiltonian stationary Lagrangian immersion. As in \cite{CW2}, we first give the definition for general Lagrangian integral $n$-varifolds (An integral $n$-varifold $V$ on $(M, \omega, h)$ is called {\sl Lagrangian} if $\omega|_{T_xV} = 0$ for $V$-almost every $x$). 

\begin{dfn} \label{dfn HSL for varifolds}
A Lagrangian integral $n$-varifold $V$ in $U$ is called Hamiltonian stationary Lagrangian (HSL) in $U$ if 
\begin{align} \label{dfn of HSL for varifold}
\int_{U} h (J\nabla ^M f, \vec H) d\mu_V = 0,  \ \ \forall f\in C^\infty_c(U),
\end{align} 
where $\vec H$ is the generalized mean curvature vector of $V$. 
\end{dfn}

\begin{dfn} \label{dfn HSL for immersion}
Let $L$ be a Lagrangian immersion into an open subset $U$ of $M$ given by a proper immersion $\iota$, and let $|L|$ be the integral $n$-varifold associated with $L$ as in (\ref{eqn dfn of |L|}). $L$ is called a Hamiltonian stationary Lagrangian (HSL) immersion in $U$ if $|L|$ is HSL as in Definition \ref{dfn HSL for varifolds}.
\end{dfn}

Let $\iota : N\to U$ be a proper Lagrangian immersion. For any $f\in C^\infty_c(U)$, let $\bar f =  f\circ \iota$. By (\ref{eqn dfn of mu_L}), (\ref{omega, h, J compatibility condition}) and (\ref{eqn dfn mean curvature one form}), if the immersed submanifold $(L, \iota)$ is HSL in $U$, then
$$ \int_N (d\bar f, \alpha)_g dV_g = 0,$$
where $\alpha$ is the mean curvature one form of $L$ defined in (\ref{eqn dfn mean curvature one form}). Let $L$ be an embedded Lagrangian submanifold on $M$. Since every smooth function with compact support $\psi$ on $L$ can be extended to $f \in C^\infty_c(M)$, $L$ is HSL if and only if 
$$\int_N (d\psi,\alpha)_g  dV_g = 0, \ \ \ \forall \psi \in C^\infty_c(N).$$
This implies that $\alpha$ is co-closed. That is,
\begin{equation} \label{eqn HSL d^* alpha = 0}
 d^* \alpha = 0, 
\end{equation}
where $d^* = \pm *d*$ is the formal adjoint of the exterior derivative $d$ with respect to the metric $g$. 

Let $L$ be an immersed submanifold in $M$ given by an immersion $\iota : N \to M$ and let $U\subset M$ be an open set in $M$. Decompose $\iota^{-1} (U)$ as connected components 
\begin{align}
\iota^{-1}(U) = \bigsqcup_i V_i.
\end{align}
If $\iota|_{V_i}$ is an embedding for each $i$, we say that each $L_i = \iota (V_i)$ is an {\sl embedded connected component} of $U\cap L$ and that $\iota$ splits into embedded connected components on $U$. 

As in \cite[Proposition 2.2]{CW2}, if $L$ is a proper immersion, for each $p \in L$ there is an open neighborhood $U$ of $p$ so that $L\cap U$ splits into finitely many embedded connected components. We will have a more precise statement in Proposition \ref{prop split into embedded components}.


\begin{prop} \label{prop L is HSL iff alpha is co-closed}
Let $L$ be a Lagrangian submanifold in an open subset $U$ of $M$ given by a proper immersion $\iota : N \to U$. Then $L$ is HSL as in Definition \ref{dfn HSL for immersion} if and only if its mean curvature one form $\alpha$ is co-closed. 
\end{prop}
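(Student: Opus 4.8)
The plan is to reduce the varifold condition to a statement about the one-form $\alpha$ on $N$ and then treat the two implications separately. The computation recorded in the discussion preceding the statement in fact yields the identity
$$\int_U h(J\nabla^M f, \vec H)\, d\mu_L \;=\; \int_N (d\bar f, \alpha)_g\, dV_g, \qquad \bar f := f\circ\iota,$$
for every $f\in C^\infty_c(U)$: using $\mu_L=\iota_\sharp dV_g$, the compatibility relations \eqref{omega, h, J compatibility condition}, and the fact that $J$ interchanges the tangent and normal bundles of a Lagrangian, only the tangential part $(\nabla^M f)^{T}=\nabla^g\bar f$ survives the pairing against the normal vector $\vec H$. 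Since $\iota$ is proper, $\bar f\in C^\infty_c(N)$, so integration by parts (the adjointness of $d^*$, with no boundary term) gives $\int_N(d\bar f,\alpha)_g\,dV_g=\int_N\bar f\,d^*\alpha\,dV_g$. The direction ($\Leftarrow$) is then immediate: if $d^*\alpha=0$ the right-hand side vanishes for all $f$, so \eqref{dfn of HSL for varifold} holds and $L$ is HSL.

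For ($\Rightarrow$), the hypothesis gives $\int_N(f\circ\iota)\,d^*\alpha\,dV_g=0$ for all $f\in C^\infty_c(U)$, and I want to upgrade this to $\int_N\psi\,d^*\alpha\,dV_g=0$ for every $\psi\in C^\infty_c(N)$, which by the fundamental lemma forces $d^*\alpha\equiv0$. The obstruction is that, when $\iota$ is not injective, the functions $f\circ\iota$ do not exhaust $C^\infty_c(N)$. To bridge the gap I would localize with a partition of unity and, about each point $p=\iota(q)$, use the splitting $\iota^{-1}(U)=\bigsqcup_i V_i$ into finitely many embedded sheets from Proposition \ref{prop split into embedded components}. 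Writing $G:=d^*\alpha$, the hypothesis says precisely that the pushforward signed measure $\nu:=\iota_\sharp(G\,dV_g)$ vanishes on $U$; decomposing $\nu=\sum_i(\iota|_{V_i})_\sharp(G\,dV_g)$ as a sum over the embedded sheets $L_i=\iota(V_i)$, the goal becomes to deduce $G=0$ on each $V_i$ from $\nu=0$.

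The one genuinely immersed phenomenon --- and the step I expect to be the main obstacle --- is the danger that contributions of distinct sheets through a common point cancel inside $\nu$. This cannot occur: if two sheets $L_i,L_j$ coincide on a set of positive $n$-dimensional measure they share the same tangent planes there, hence the same mean curvature vector $\vec H$, the same one-form $\alpha$, and the same $G=d^*\alpha$; as $dV_g$ is a positive measure their contributions to $\nu$ add with equal sign rather than cancel. On the complement, where $\iota(V_i)$ locally lies on a single sheet, a bump-function argument recovers $G$ pointwise. Thus $\nu=0$ forces $G=0$ almost everywhere, and the smoothness of $\alpha$ promotes this to $d^*\alpha\equiv0$ on all of $N$. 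The delicate bookkeeping is the decomposition of each $\iota(V_i)$ into its single-sheet part and the finitely many positive-measure coincidence loci with the other sheets; the remaining ingredients are the standard first-variation identity and the integration by parts already in hand.
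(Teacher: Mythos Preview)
Your reduction to the identity $\int_U h(J\nabla^M f,\vec H)\,d\mu_L=\int_N \bar f\,d^*\alpha\,dV_g$ and the direction $(\Leftarrow)$ are fine and match the paper.

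For $(\Rightarrow)$ there is a genuine gap. You assert that if two sheets $L_i,L_j$ coincide on a set of positive $n$-measure then ``they share the same tangent planes there, \emph{hence} the same mean curvature vector $\vec H$, the same one-form $\alpha$, and the same $G=d^*\alpha$.'' Agreement of tangent planes is a first-order condition; $\vec H$ is second order and $d^*\alpha$ is third order in the immersion, so there is no ``hence''. Two smooth $n$-submanifolds can touch tangentially on a fat closed set while having different second fundamental forms at those points. What one can salvage is that at almost every point of the coincidence set the full jets of the two sheets agree (by iterating Lebesgue density), so $G_i=G_j$ \emph{a.e.} on that set --- but this is a nontrivial argument you would need to supply. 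Moreover, your dichotomy ``positive-measure coincidence locus versus single-sheet complement'' is not exhaustive: points where several sheets meet in a measure-zero set belong to neither bin, so you must also argue that such points are negligible and that the single-sheet region is dense in each $V_i$ before continuity can close the argument. With three or more sheets overlapping in various patterns, the ``equal-sign, no cancellation'' bookkeeping you allude to is also not as immediate as stated.

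The paper bypasses all of this. Instead of analysing the pushforward measure sheet by sheet, it works at \emph{embedded points} of $L$ --- points $q$ admitting a neighborhood $V_q$ in which $L\cap V_q$ is a single embedded submanifold (possibly covered with multiplicity). There, every $\psi\in C^\infty_c(V_q\cap L)$ extends to some $f\in C^\infty_c(U)$, so the HSL hypothesis gives $d^*\alpha=0$ on $\iota^{-1}(V_q)$ directly. One then invokes the fact (from \cite[Proposition 2.5]{CW2}) that the embedded points are dense in each sheet $\Sigma_i$, and continuity of $d^*\alpha$ finishes. This density statement is precisely the ingredient your measure-theoretic route is missing; once you have it, the cancellation analysis becomes unnecessary.
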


\begin{proof}
The proof is similar to that of \cite[Proposition 2.5 (1)]{CW2} and we only sketch the proof for the direction $(\Rightarrow)$. Let $p\in L$. By \cite[Proposition 2.2]{CW2}, let $W$ be an open neighborhood of $p$ in $U$ so that $\iota^{-1} (W)$ splits into finitely many connected components $\iota|_{E_i} : E_i \to \Sigma_i$, where $\Sigma_i = \iota (E_i)$. We recall that $q\in W\cap L$ is called an {\sl embedded point} of $L$ if there is an open neighborhood $V_q$ of $q$ in $W$ so that $L\cap V_q$ is an embedded submanifold. Let $\mathscr E\subset L\cap W$ be the set of all embedded points of $L$. Using embeddedness, one can argue as in above that $d^*\alpha = 0$ on $\iota^{-1} (\mathscr E)$ using Definition \ref{dfn HSL for immersion}. For each $i$, one can prove that $\mathscr E \cap \Sigma_i$ is dense in $\Sigma_i$. Hence $d^*\alpha = 0$ on $\iota^{-1}(W)$. Since $p\in L$ is arbitrary, $\alpha$ is co-closed. 
\end{proof}

In particular, the Hamiltonian stationary condition holds on each embedded connected components.  
\begin{cor} \label{cor HSL on each embedded connected component}
Let $L$ be a properly immersed HSL in an open subset $U$ of $M$. Let $W$ be an open subset of $U$ such that $W\cap L$ splits into embedded connected components $\iota|_{E_i} : E_i \to \Sigma_i$. Then for each $i$, 
\begin{equation} \label{eqn HSL condition on each embedded connected components}
    \int_{W} h(J \nabla^M f,  \vec H_i) d\mu_{\Sigma_i} = 0, \ \ \ \forall f\in C^\infty_c(W).  
\end{equation}
Here $\vec H_i$ is the mean curvature vector of $\Sigma_i$. 
\end{cor}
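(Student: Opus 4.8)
The plan is to reduce the statement to Proposition \ref{prop L is HSL iff alpha is co-closed}, exploiting the fact that once the Hamiltonian stationary condition is reformulated via the mean curvature one form it becomes a purely local (indeed pointwise) condition that descends to each embedded piece. First I would apply Proposition \ref{prop L is HSL iff alpha is co-closed} to $L$ itself: since $(L,\iota)$ is HSL in $U$, its mean curvature one form $\alpha$ on $N$ satisfies $d^*\alpha = 0$ everywhere on $(N,g)$.

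Next I would transfer this to each component. Because $\iota^{-1}(W) = \bigsqcup_i E_i$ is the decomposition of an open set into connected components, each $E_i$ is open in $N$ and, by hypothesis, $\iota|_{E_i} : E_i \to W$ is a proper embedding onto $\Sigma_i$, so $\Sigma_i$ is an embedded Lagrangian submanifold of the open set $W$. The second fundamental form and hence the mean curvature vector $\vec H_i$ and the mean curvature one form $\alpha_i$ of $\Sigma_i$ are determined pointwise by the immersion through (\ref{formula for A}) and (\ref{eqn dfn mean curvature one form}); since $\iota|_{E_i}$ is the restriction of $\iota$ and the induced metric on $E_i$ is $g|_{E_i}$, we get $\alpha_i = \alpha|_{E_i}$ as one-forms on $E_i$. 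As the Hodge star and exterior derivative of $g|_{E_i}$ agree on $E_i$ with those of $g$, the identity $d^*\alpha = 0$ on $N$ forces $d^*\alpha_i = 0$ on $E_i$. Applying the already-established equivalence of Proposition \ref{prop L is HSL iff alpha is co-closed} now to the embedded Lagrangian $\Sigma_i \subset W$, co-closedness of $\alpha_i$ is equivalent to $\Sigma_i$ being HSL in $W$, which is exactly (\ref{eqn HSL condition on each embedded connected components}). For the embedded case this last equivalence is particularly clean: every $\psi \in C^\infty_c(\Sigma_i)$ extends to some $f \in C^\infty_c(W)$, and the chain of identities from (\ref{eqn dfn of mu_L}), (\ref{omega, h, J compatibility condition}) and (\ref{eqn dfn mean curvature one form}) rewrites $\int_W h(J\nabla^M f, \vec H_i)\, d\mu_{\Sigma_i}$ as $\int_{E_i} (d\bar f, \alpha_i)_g\, dV_g$, which vanishes for all such $f$ precisely when $\alpha_i$ is co-closed.

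The only point requiring genuine care is the identification $\alpha_i = \alpha|_{E_i}$, together with the compatibility of the induced metrics and co-differentials on the open subset $E_i \subset N$; everything else is a direct invocation of the proposition. It is worth emphasizing that, because co-closedness is a local condition and each $\Sigma_i$ is \emph{embedded}, no density or global-extension argument (such as the one needed to pass through the non-embedded points in the proof of Proposition \ref{prop L is HSL iff alpha is co-closed}) is required here: the reverse implication is immediate on the embedded pieces.
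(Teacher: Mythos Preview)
Your proposal is correct and follows precisely the approach the paper intends: the corollary is stated immediately after Proposition \ref{prop L is HSL iff alpha is co-closed} with only the remark ``In particular, the Hamiltonian stationary condition holds on each embedded connected component,'' and your argument spells out exactly this reduction---use the proposition to pass from HSL to the pointwise condition $d^*\alpha=0$, restrict to the open subset $E_i$, and then invoke the (easy, embedded) reverse direction. The care you take with $\alpha_i=\alpha|_{E_i}$ and the locality of $d^*$ is appropriate and nothing further is needed.
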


\subsection{Darboux coordinates with estimates} \label{subsection darboux with estimates}
In this subsection we recall the Darboux coordinates with estimates constructed in \cite{JLS}. Let $(M, \omega)$ be a compact symplectic $2n$-manifold, and let $h$ be a Riemannian metric on $M$ compatible with $\omega$. Let $\mathbb U$ be the $U(n)$ frame bundle of $M$: that is, for each $p\in M$, the fiber $\mathbb U_p$ consists of unitary linear mappings $v: \mathbb R^{2n} \to T_pM$. 

We recall Proposition 3.2 in \cite{JLS}: 
\begin{prop} \label{D coord. with estimates}
For small $\epsilon >0$, we can choose a family of embeddings $\Upsilon_{p,v} : B^{2n}_\epsilon \to M$ depending smoothly on $(p,v) \in \mathbb U$ such that for all $(p, v)\in \mathbb U$, we have 
\begin{itemize}
\item [(i)] $\Upsilon_{p,v}(0)= p$ and $d\Upsilon_{p,v} |_0 = v : \mathbb R^{2n} \to T_pM$;
\item [(ii)] $\Upsilon_{p, v\circ \gamma} \cong \Upsilon_{p,v} \circ \gamma$ for all $\gamma \in U(n)$;
\item [(iii)] $\Upsilon^*_{p,v} (\omega) = \omega_0 = \frac{i}{2} \sum_{j=1}^n dz_j \wedge d\bar z_j$; and 
\item [(iv)] $\Upsilon^*_{p,v} (h) = h_0 + O(|z|) = \sum_{j=1}^n |dz_j|^2 + O(|z|)$. 
\end{itemize}
\end{prop}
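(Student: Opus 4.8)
The plan is to build $\Upsilon_{p,v}$ in two stages: first a metric exponential chart that already carries the correct $1$-jet and the $U(n)$-equivariance for free, and then a parametrized relative Darboux (Moser) correction that rectifies the symplectic form to $\omega_0$ without destroying the previous normalizations. The guiding principle is that \emph{every} auxiliary choice must be natural under the linear $U(n)$-action, since this is what ultimately yields property (ii).

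First I would set $\phi_{p,v}(z) = \exp_p(v(z))$, where $\exp_p$ is the exponential map of $h$. By construction $\phi_{p,v}(0)=p$ and $d\phi_{p,v}|_0 = v$, and because $v$ is a real isometry for which normal coordinates apply, $\phi_{p,v}^* h = h_0 + O(|z|^2)$. Naturality of the exponential map under the linear action gives $\phi_{p,v\circ\gamma} = \phi_{p,v}\circ\gamma$ for $\gamma\in U(n)$. Since $v$ is unitary and $\omega = h(\cdot, J\cdot)$, a direct computation shows $(\phi_{p,v}^*\omega)_0 = \omega_0$, so that $\beta := \phi_{p,v}^*\omega - \omega_0$ is a closed $2$-form vanishing at the origin; the only defect of $\phi_{p,v}$ is that $\beta \neq 0$ away from $0$. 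Next I would run the Moser argument. Using the radial Poincaré homotopy operator $K$ on $B^{2n}_\epsilon$ I write $\beta = d\sigma$ with $\sigma := K\beta$, and since $\beta = O(|z|)$ one gets $\sigma = O(|z|^2)$. The interpolation $\omega_t := \omega_0 + t\beta$ is nondegenerate on a ball of uniform radius (by compactness of $\mathbb{U}$), so I can solve $\iota_{X_t}\omega_t = -\sigma$ for a time-dependent vector field $X_t$; because $\sigma$ vanishes to second order, $X_t$ vanishes to first order at $0$. Its flow $\psi_{p,v} := \psi_1$ then fixes the origin with $d\psi_{p,v}|_0 = \mathrm{Id}$ and satisfies $\psi_{p,v}^*(\phi_{p,v}^*\omega) = \omega_0$.

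Setting $\Upsilon_{p,v} := \phi_{p,v}\circ\psi_{p,v}$ then verifies the pointwise conditions: (iii) is immediate from $\psi_{p,v}^*\phi_{p,v}^*\omega = \omega_0$; condition (i) holds because $\psi_{p,v}(0)=0$ and $d\psi_{p,v}|_0 = \mathrm{Id}$ restores $d\Upsilon_{p,v}|_0 = v\circ\mathrm{Id} = v$; and (iv) follows since $\psi_{p,v}(z) = z + O(|z|^2)$ forces $\psi_{p,v}^*(h_0 + O(|z|^2)) = h_0 + O(|z|)$. For the equivariance (ii), the key point is that $\gamma\in U(n)$ preserves $\omega_0$, so all Moser data transform by pullback: from $\omega_1^{v\circ\gamma} = \gamma^*\omega_1^{v}$ and the fact that $K$ and the radial field $R$ commute with linear maps, one gets $\sigma^{v\circ\gamma} = \gamma^*\sigma^{v}$, $\omega_t^{v\circ\gamma} = \gamma^*\omega_t^{v}$, hence $\psi_{p,v\circ\gamma} = \gamma^{-1}\circ\psi_{p,v}\circ\gamma$. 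Combining this with $\phi_{p,v\circ\gamma} = \phi_{p,v}\circ\gamma$ gives $\Upsilon_{p,v\circ\gamma} = \Upsilon_{p,v}\circ\gamma$, which is (ii). Smooth dependence on $(p,v)$ and a single uniform $\epsilon$ follow from smoothness of the exponential map and of the solution of the Moser ODE, together with the compactness of $\mathbb{U}$.

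The main obstacle is to achieve the normalization in (i) and the exact equivariance in (ii) \emph{simultaneously}. Fixing the origin with identity differential forces the primitive $\sigma$ to vanish to second order, while equivariance forces $\sigma$ to be produced by a $U(n)$-equivariant operation; a generic choice of primitive of $\beta$ would satisfy neither. The radial homotopy operator based at the origin is precisely the device that resolves both at once, since it is canonical (hence commutes with the linear $U(n)$-action) and lowers the vanishing order appropriately. Verifying uniform nondegeneracy of $\omega_t$ and a uniform existence time for the Moser flow across all of $\mathbb{U}$ is the remaining technical point, and this is where compactness of $M$ is used.
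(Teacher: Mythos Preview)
The paper does not give its own proof of this proposition; it simply quotes it as Proposition~3.2 of \cite{JLS}. Your argument is the standard parametrized Moser/relative Darboux construction, with the radial Poincar\'e homotopy supplying the $U(n)$-equivariant primitive, and this is essentially the approach taken in \cite{JLS}. The proof is correct.

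One small imprecision worth tightening: from $\sigma = O(|z|^2)$ and the nondegeneracy of $\omega_t$ at the origin you actually get $X_t = O(|z|^2)$, not merely ``vanishing to first order''. It is this stronger vanishing ($X_t(0)=0$ \emph{and} $DX_t(0)=0$) that is needed to conclude $d\psi_{p,v}|_0 = \mathrm{Id}$: if only $X_t(0)=0$ held, the linearized flow $\frac{d}{dt}\bigl(d\psi_t|_0\bigr) = DX_t(0)\cdot d\psi_t|_0$ could produce a nontrivial differential. Since you subsequently use $\psi_{p,v}(z) = z + O(|z|^2)$, you are in fact implicitly using the second-order vanishing, so the logic is fine once the wording is adjusted.
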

From now on, by scaling $\omega, h$ if necessary, we assume $\epsilon=2$ in Proposition \ref{D coord. with estimates}. 
\begin{dfn}
Let $p\in M$ and $0<r\le 1$. The {\sl Darboux ball} at $p$ with radius $r$ is 
\begin{align}
\mathscr B_r (p) := \Upsilon_{p,v} (B^{2n}_r).
\end{align}
\end{dfn}

By (ii) in Proposition \ref{D coord. with estimates}, $\mathscr B_r(p)$ is well-defined, independent of $v \in \mathbb U_p$.  For each $R\ge 1$ and $0<t\le R^{-1}$, we define $t : B^{2n}_R \to B^{2n}_1$ as $z\mapsto tz$. The following is proved in \cite[Proposition 3.4]{JLS}. 

\begin{prop} \label{D coord. with estimates scaled}
There exists $K_0, K_1, K_2, \cdots, $ depending only on $(M, \omega, h)$ such that for all $(p, v)\in U$ and $t\in (0,R^{-1}]$, the metric $ h^t_{p,v}  = t^{-2} (\Upsilon_{p,v} \circ t)^* h$ satisfies on $B^{2n}_R$
\begin{align} \label{eqn estimates on g^t}
| h^t_{p,v} - h_0| \le K_0 t \ \text{ and }  \ |\partial_j h^t_{p,v}|\le K_j t^j \ \text{ for }j=1,2,\cdots.
\end{align}
\end{prop}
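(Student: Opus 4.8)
The plan is to reduce the proposition to Taylor estimates for the fixed, uniformly smooth family of pulled-back metrics $g_{p,v} := \Upsilon_{p,v}^* h$ on $B^{2n}_1$, and then to read off the powers of $t$ from the chain rule. First I would record the effect of the scaling map $\delta_t(z) = tz$ (the map denoted $t$ in the statement). Since $d\delta_t = t\,\mathrm{Id}$, pulling back a metric multiplies its coefficient matrix by $t^2$ and evaluates it at the scaled point, so
\[
(\delta_t^* g_{p,v})_{ab}(z) = t^2\,(g_{p,v})_{ab}(tz),
\qquad\text{whence}\qquad
(h^t_{p,v})_{ab}(z) = t^{-2}(\delta_t^* g_{p,v})_{ab}(z) = (g_{p,v})_{ab}(tz).
\]
In other words, $h^t_{p,v}$ at $z$ is simply $g_{p,v}$ evaluated at $tz$; note that $tz \in B^{2n}_1$ whenever $z \in B^{2n}_R$ and $t \le R^{-1}$, so the right-hand side is defined on all of $B^{2n}_R$.

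Next I would establish uniform $C^k$ bounds for the family $g_{p,v}$. By Proposition \ref{D coord. with estimates}, $\Upsilon_{p,v}(z)$ depends smoothly on $(p,v,z) \in \mathbb U \times B^{2n}_2$, so $(p,v,z) \mapsto (g_{p,v})_{ab}(z)$ is smooth. Since $M$ is compact its $U(n)$-frame bundle $\mathbb U$ is compact, so for each $j \ge 0$ the quantity
\[
C_j := \sup_{(p,v) \in \mathbb U}\ \sup_{w \in \overline{B^{2n}_1}}\ |\partial^j_w g_{p,v}(w)|
\]
is finite and depends only on $(M, \omega, h)$. This uniformity is the heart of the matter: it is compactness of $\mathbb U$ that upgrades the pointwise smoothness furnished by Proposition \ref{D coord. with estimates} into constants that do not depend on $(p,v)$.

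With these constants in hand the estimates are immediate. Differentiating $(h^t_{p,v})_{ab}(z) = (g_{p,v})_{ab}(tz)$ in $z$, each derivative produces a factor $t$, so for $j \ge 1$
\[
\partial^j_z (h^t_{p,v})_{ab}(z) = t^j\,(\partial^j_w g_{p,v})_{ab}(tz),
\qquad\text{hence}\qquad
|\partial^j h^t_{p,v}(z)| \le C_j\, t^j =: K_j t^j
\]
on $B^{2n}_R$. For the zeroth-order bound I would use part (i) of Proposition \ref{D coord. with estimates}: since $d\Upsilon_{p,v}|_0 = v$ is a unitary isometry, $g_{p,v}(0) = h_0$. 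First-order Taylor expansion about the origin then gives
\[
|h^t_{p,v}(z) - h_0| = |g_{p,v}(tz) - g_{p,v}(0)| \le C_1\,|tz| = C_1\, t\,|z|,
\]
and on $B^{2n}_R$ with $t \le R^{-1}$ this is the asserted linear-in-$t$ estimate $|h^t_{p,v} - h_0| \le K_0 t$.

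I expect the only genuine subtlety to be this uniformity step: one must assemble the smooth dependence of the entire family $\Upsilon_{p,v}$ on $(p,v)$ together with compactness of $\mathbb U$ to produce constants $K_j$ independent of $(p,v)$, after which the argument is purely a bookkeeping of the scaling factors $t^j$ through the chain rule. A minor point to state carefully is that, under the normalization $\epsilon = 2$, the closed ball $\overline{B^{2n}_1}$ lies in the interior of the common domain $B^{2n}_2$ of the $\Upsilon_{p,v}$, so the suprema defining $C_j$ are genuinely finite and the evaluation at $tz$ always stays within the region where these bounds hold.
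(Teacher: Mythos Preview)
The paper does not supply its own proof of this proposition; it simply quotes the result from \cite[Proposition 3.4]{JLS}. Your argument---rewrite $h^t_{p,v}(z)=g_{p,v}(tz)$ via the scaling identity, use compactness of $\mathbb U$ together with the smooth dependence in Proposition~\ref{D coord. with estimates} to get uniform $C^j$ bounds $C_j$ on $g_{p,v}$ over $\overline{B^{2n}_1}$, and then read off the $t^j$ factors by the chain rule---is the standard one and is essentially what the cited reference does. For $j\ge 1$ your bound $|\partial^j h^t_{p,v}|\le C_j t^j$ is exactly right and uniform in $R$, since $tz\in B^{2n}_1$.

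One small point deserves care in the $j=0$ step. Your Taylor estimate gives $|h^t_{p,v}(z)-h_0|\le C_1\,t|z|$, and on $B^{2n}_R$ this yields only $|h^t_{p,v}-h_0|\le C_1\,tR$, not $K_0 t$ with $K_0$ independent of $R$; your final sentence elides this. Indeed, since Proposition~\ref{D coord. with estimates}(iv) guarantees only $g_{p,v}(w)=h_0+O(|w|)$, one cannot in general do better than a bound of order $tR$ on $B^{2n}_R$. In every application in the paper one has $tR\le 1$ (this is built into the hypothesis $t\le R^{-1}$), so $C_1 tR\le C_1$ and the metrics are uniformly close to $h_0$; this is what is actually used downstream. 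So your argument is correct for what can be proved, and the apparent discrepancy is an imprecision in the stated form of the constant $K_0$ rather than a gap in your reasoning.
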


Again, by scaling $\omega, h$ if necessary, we assume that 
\begin{align} \label{K_0, K_1 le epsilon_1}
\max\{ K_0, K_1\} \le \epsilon_1<\max\{1/2, \epsilon_1(1/2, n)\},
\end{align}
where $\epsilon_1(1/2, n)$ is given in \cite[Lemma 3.2]{BCW}. By the first inequality in (\ref{eqn estimates on g^t}) and $K_0 < 1/2$, 
\begin{align} \label{comparing geodesic ball and darboux ball}
B^M_{r/2} (p) \subset \mathscr B_r (p) \subset B^M_{2r} (p) 
\end{align}
for all $p\in M$ and $r \in (0,1)$, where $B^M_{s} (p)$ is the metric ball with radius $s$ in $(M, h)$. 

\subsection{Convergence of immersed submanifolds}
We recall the definition of smooth convergence as local graphs of a sequence of immersed submanifolds introduced in \cite{CW2} (see also \cite{CS}, \cite{Sharp} for similar definitions for sequences of embedded hypersurfaces). 

\begin{dfn} \label{dfn converges graphically in U}
Let $(\Sigma_k)_{k=1}^\infty$ be a sequence of properly immersed submanifolds in an open subset $U$ of a smooth manifold $M$ given by a proper immersion $\iota_k : N_k \to U$ for each $k$. Let $S$ be an integral varifold in $(M, h)$ and $m\in \mathbb N\cup\{\infty\}$. We say that $(\Sigma_k)_{k=1}^\infty$ converges graphically in $U$ to $S$ in $C^m$-topology, if there is $I \in\mathbb N \cup \{0\}$ such that 
\begin{itemize}
\item $\Sigma_k$ splits into $I$ embedded connected components $\iota_k|_{E^i_k} :E^i_k \to \Sigma^i_k$, $i=1, \cdots I$. Here $E_k^i$ is open in $N_k$ and $\iota_k|_{E^i_k}$ is an embedding onto $\Sigma^i_k$, and
\item $S \cap U = \Sigma^1 \cup \cdots \cup \Sigma^{I}$, where each $\Sigma^i$, $i=1, \cdots, I$, is an embedded submanifold in $U$.
\end{itemize}
 Moreover, for each $i = 1, \cdots, I$, there is $p_i \in \Sigma^i$ such that $\Sigma^i_k$, $k\in \mathbb N$, and $\Sigma^i$ are graphs of a smooth mappings $X^i_k$ and $X^i$ respectively, where $X^i_k$ and $X^i$ are defined on some open subsets of $T_{p_i} \Sigma^i$, such that $X^i_k$ converges in $C^m$ norm to $X^i$ as $k\to \infty$.
\end{dfn}
We remark when $I=0$, it is understood that $\Sigma_k = S\cap U =\emptyset$. 

\begin{dfn} \label{dfn local graphical smooth convergence}
Let $(\Sigma_k)_{k=1}^\infty$ be a sequence of properly immersed submanifolds in $M$, and let $V$ be an open subset of $M$. We say that $(\Sigma_k)_{k=1}^\infty$ converges in $V$ smoothly as local graphs to an integral varifold $S$, if for all $q\in V$, there is an open neighborhood $U$ of $q$ contained in $V$ such that $(\Sigma_k\cap U)_{k=1}^\infty$ converges graphically to $S\cap U$ in $C^\infty$-topology.  
\end{dfn}

\section{Estimates for double divergence equation} \label{Estimates for double divergence equation}
In \cite{BCW}, the authors study the following fourth order equation in double divergence form 
\begin{equation}\label{eqn double divergence equ classical form}
\partial_{x^i} \partial_{x^j} F^{jl} (x, Du, D^2u) = \partial_{x^k} a^k (x, Du, D^2u) - b(x, Du, D^2u)
\end{equation}
where $F^{ij}$, $a^k$ and $b$ are smooth functions in $x, Du, D^2u$, defined in a convex neighborhood $U\subset B^n_r \times \mathbb R^n \times S^{n\times n}$, where $S^{n\times n}$ is the space of $n\times n$ symmetric matrices. A $W^{2, \infty}$ function $u$ on $B^n_r$ is called a weak solution to (\ref{eqn double divergence equ classical form}) if
\begin{align} \label{double divergence form}
\int_{B_r} \big[ F^{ij} \eta_{ij} + a^k \eta_k + b\eta\big] dx = 0, \ \ \text{ for all } \eta\in C^\infty _c(B^n_r).
\end{align}

\begin{dfn} 
Let $\Lambda >0$. We say that (\ref{eqn double divergence equ classical form}) is $\Lambda$-uniform on $U$ if the standard Legendre ellipticity condition 
\begin{align} \label{Lambda uniform definition}
\frac{\partial F^{jl}}{\partial u_{ik} } (\xi ) \sigma_{ij} \sigma_{kl} \ge \Lambda |\sigma|^2 ,\ \forall \sigma \in S^{n\times n}
\end{align}
is satisfied for any $\xi \in U$. 
\end{dfn}
For each $p = 1, \cdots, n$, let $h_p = he_p$, 
\begin{align} \label{dfn of eta -hp}
\eta^{-h_p} (x) : = \frac{\eta(x-h_p) - \eta(x) }{h},
\end{align}
and $f:=u^{h_p}$. Define 
\begin{align*}
\xi_0 &= (x, Du(x), D^2u(x)), \\
\xi_h &= (x+ h_p, Du(x+h_p), D^2u(x+ h_p)), \\
\vec V &= \xi_h - \xi_0.
\end{align*}

One can check that (\ref{double divergence form}) implies 
\begin{align} \label{double divergence for f}
\int_{B_r} \left( \beta^{ij,kl} f_{ik} \eta _{jl} + \gamma^{il} \eta_{jl} + \psi^k \eta_k^{-h_p} + \zeta \eta^{-h_p}\right) dx = 0 \ \ \ \ \forall \eta \in C^\infty_c(B^n_{r-h}),
\end{align}
where 
\begin{align*}
\beta^{ij, kl} (x) &= \int_0^1 \frac{\partial F^{jl} }{\partial u_{ik} }(\xi_0 + tV) dt, \\
\gamma^{jl} (x) &= \int_0^1 \left( \frac{\partial F^{jl}}{\partial u_k }( \xi_0 +tV)f_k  + \frac{\partial F^{jl} }{\partial x_p} (\xi_0 + tV) \right)dt, \\
\psi^k(x)  &= a^k(x, Du, D^2u), \\
\zeta(x) &= b(x, Du, D^2u). 
\end{align*}

In the following, we assume that $u$ is at least $C^2$ and satisfies (\ref{double divergence form}). The following propositions are the key steps in proving the regularity results in \cite{BCW}.

\begin{prop} \label{prop C3alpha estimates}
Assume that $u \in C^{2,\alpha}(B^n_r)$ satisfies  (\ref{double divergence for f}), then for all $0<r'< r$ and $0<\alpha'< \alpha$ we have $u\in C^{3,\alpha' }(B^n_{r'})$ and 
\begin{align} \label{C 3alpha estimates in balls of arbitrary radius}
\|D^3 u\|_{C^{\alpha'} (B^n_{r'}) } \le C(\Lambda, \alpha, \alpha', r,r', \|u\|_{W^{2,\infty}(B^n_r)}, \|\Psi\|_{C^{\alpha} (B^n_r)})
\end{align}
where for simplicity we write 
\begin{align*}
\Psi = \big( (\beta_0 ^{ij,kl})_{i,j,k,l}, (\gamma_p^{jl})_{j,l,p}, (\psi^k)_k, \zeta\big). 
\end{align*}
and 
\begin{align}
\beta^{ij,kl}_0 (x) &= \frac{\partial F^{jl}}{\partial u_{ik}} (x,  Du, D^2u), \\
\label{dfn of gamma jl p} \gamma^{jl}_p (x) &=  \frac{\partial F^{jl}}{\partial u_{k}} (x,  Du, D^2u) u_{pk} +  \frac{\partial F^{jl}}{\partial x^p} (x,  Du, D^2u).
\end{align}
\end{prop}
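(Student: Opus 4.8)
The plan is to run the difference-quotient method: regard \eqref{double divergence for f} as a \emph{linear}, uniformly elliptic fourth-order equation in double divergence form for the difference quotient $f = u^{h_p}$, establish an interior $C^{2,\alpha'}$ estimate for $f$ that is uniform in the step size $h$, and then let $h\to 0$. Since $D^2 f = (D^2 u)^{h_p}$, a bound on $\|D^2 f\|_{C^{\alpha'}(B^n_{r'})}$ that is uniform in $h$ and in the direction $p=1,\dots,n$ forces $D^3u$ to exist and lie in $C^{\alpha'}(B^n_{r'})$, yielding \eqref{C 3alpha estimates in balls of arbitrary radius}. This is the only place the $C^{2,\alpha}$ hypothesis on $u$ enters: it is exactly what is needed to produce $C^\alpha$ coefficients in the linearized equation.

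First I would verify that \eqref{double divergence for f} satisfies the structural hypotheses of the linear theory, uniformly in $h$. The principal coefficient $\beta^{ij,kl}(x)=\int_0^1 \frac{\partial F^{jl}}{\partial u_{ik}}(\xi_0 + tV)\,dt$ inherits the Legendre condition \eqref{Lambda uniform definition}: since the domain is convex and $\xi_0,\xi_h$ lie in it, each integrand is $\Lambda$-elliptic, hence so is the average, giving $\beta^{ij,kl}\sigma_{ij}\sigma_{kl}\ge \Lambda|\sigma|^2$ independently of $h$. Moreover each argument $\xi_0+tV$ is, as a function of $x$, a translate and convex combination of $(x,Du,D^2u)$, so its $C^\alpha$ norm is controlled by $\|u\|_{C^{2,\alpha}}$ uniformly in $h$ and $t$; composing with the smooth $F,a,b$ and averaging in $t$ then bounds $\|\beta^{ij,kl}\|_{C^\alpha}$, $\|\gamma^{jl}\|_{C^\alpha}$, $\|\psi^k\|_{C^\alpha}$ and $\|\zeta\|_{C^\alpha}$ in terms of $\|\Psi\|_{C^\alpha}$ and $\|u\|_{W^{2,\infty}}$, again uniformly in $h$. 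Here it is essential to keep the lower-order terms in the form $\psi^k\eta^{-h_p}_k$ and $\zeta\,\eta^{-h_p}$ of \eqref{double divergence for f}, transferring the difference quotient onto the smooth test function $\eta$ rather than onto $\psi,\zeta$; this is what allows the $C^\alpha$ control of $\psi,\zeta$, instead of their difference quotients (which would blow up as $h\to 0$), to suffice.

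With these hypotheses in hand I would invoke the interior Schauder estimate for uniformly elliptic fourth-order double divergence equations established in \cite{BCW} to obtain, for each $0<r'<r$ and $0<\alpha'<\alpha$ and all sufficiently small $h$ (so that $B^n_{r'}$ lies well inside $B^n_{r-h}$),
\begin{align*}
\|D^2 f\|_{C^{\alpha'}(B^n_{r'})} \le C\big(\Lambda,\alpha,\alpha',r,r',\|u\|_{W^{2,\infty}(B^n_r)},\|\Psi\|_{C^\alpha(B^n_r)}\big),
\end{align*}
with $C$ independent of $h$ and $p$. Because the forcing is no higher than the fourth-order principal part ($\gamma^{jl}$ enters at the second-divergence level, $\psi^k$ and $\zeta$ at first and zeroth order), its $C^\alpha$ norms control $\|D^2 f\|_{C^{\alpha'}}$ after the derivative gain. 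Finally, letting $h\to 0$: the uniform $C^{\alpha'}$ bound and Arzel\`a--Ascoli give a $C^{\alpha'}$ subsequential limit of $\{(D^2u)^{h_p}\}$, which must coincide with the distributional limit $\partial_p D^2 u$ of the difference quotients; hence $\partial_p D^2 u\in C^{\alpha'}(B^n_{r'})$ with the same bound, and ranging over $p=1,\dots,n$ yields $u\in C^{3,\alpha'}(B^n_{r'})$ together with \eqref{C 3alpha estimates in balls of arbitrary radius}.

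The main obstacle is the linear estimate itself: unlike second-order equations, a fourth-order equation --- even a scalar one --- has no De Giorgi--Nash--Moser theory guaranteeing H\"older continuity of its top derivatives under merely measurable coefficients, so one cannot shortcut the argument. What saves the scheme is that the coefficients here are $C^\alpha$, for which a genuine Schauder/Campanato theory is available: one freezes the coefficients, compares with the constant-coefficient double divergence operator (whose ellipticity is exactly the Legendre condition \eqref{Lambda uniform definition}), and iterates Campanato-type decay estimates. Carrying this out for the double divergence structure, keeping every constant uniform in $h$ while correctly accounting for the difference-quotiented lower-order terms, is the technical heart, and is precisely the content borrowed from \cite{BCW}; the small losses $\alpha'<\alpha$ and $r'<r$ are the usual price of the freezing-and-iteration scheme.
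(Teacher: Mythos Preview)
Your proposal is correct and follows essentially the same approach as the paper: both obtain the $C^{3,\alpha'}$ estimate by appealing to the difference-quotient/Campanato analysis of \cite{BCW} (their Propositions 2.1 and 2.3), with the $C^{2,\alpha}$ hypothesis on $u$ supplying the $C^\alpha$ control of the linearized coefficients $\beta^{ij,kl},\gamma^{jl},\psi^k,\zeta$ uniformly in $h$. The paper's only additional content is an explicit bookkeeping of the constant dependencies appearing in the proof in \cite{BCW}, followed by a scaling from the special case $r=1$, $r'=1/5$ to arbitrary $0<r'<r$.
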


\begin{proof}
We first assume $r' = 1/5$ and $r=1$. Without rewriting the whole proof again, we merely point out that in the proof of \cite[Proposition 2.3]{BCW}, the constants used have the following dependence: 
\begin{align*}
C_3 &= \|\beta^{ij,kl}\|_{C^\alpha (B^n_r)},\\
C_4 &= \|\beta^{ij,kl}\|_{C^\alpha (B^n_r)},\\
C_5 &= C(n) \| \gamma^{jl}\|_{C^\alpha (B^n_r)}, \\
C_6 &= C(n) \| \psi^k\|_{C^\alpha (B^n_r)}, \\
C_7 &= C(n) \|\zeta\|_{C^\alpha (B^n_r)}, \\
C_2 &=C(\Lambda)\\
C_1 &=C_1(n)  \\
\tilde \alpha &= 1-\delta, \\
\tilde q &= \frac{n}{2-2\tilde \alpha}, \\
\tilde K &= C(n) (\| \gamma^{jl} \|^2_{L^{2\tilde q} (B^n_1)} + \| \psi ^k\|^2_{L^{2\tilde q} (B^n_1)} + \|\zeta\|_{L^{2\tilde q} (B^n_1)}), \\ 
r_0 &= r_0(C_1, \Lambda, n, \delta), \\
C_8 &= C_8 (C_1, n, \delta), \\
C_9 &= C_9 (C_1, C_8, r_0, \Lambda, \|D^2 f\|_{L^2(B^n_{1/2})}) ,\\
C_{10} &= C_{10} (C_4, C_5, C_7, \Lambda, \alpha, \delta, r_0, \| D^2f\|^2_{L^2 (B^n_{1/2})}, C_2)
\end{align*}
Thus by Proposition 2.1 and Proposition 2.3 in \cite{BCW}, 
\begin{align} \label{C 3alpha estimats in B_1}
\| D^3 u \|_{C^{\alpha -\delta/2} (B_{1/5})} \le C(\Lambda, \alpha, \delta, \|u\|_{W^{2,\infty}(B^n_1)}, \|\Psi\|_{C^{\alpha} (B^n_1)}), 
\end{align}
In general, given $0<r'<r\le 1$ and $x_0 \in B^n_{r'}$, we scale the ball $B^n_{r-r'} (x_0)$ to radius $1$ and obtain, via (\ref{C 3alpha estimats in B_1}), 
\begin{align*}
\| D^3 u \|_{C^{\alpha -\delta/2} (B^n_{(r-r')/5} (x_0))} \le C(\Lambda, \alpha, \delta, r,r', \|u\|_{W^{2,\infty}(B^n_r)}, \|\Psi\|_{C^{\alpha} (B^n_r)}), 
\end{align*}
Since $x_0 \in B^n_{r'}$ is arbitrary, (\ref{C 3alpha estimates in balls of arbitrary radius}) is proved. 
\end{proof}

From the above proposition, we obtain
\begin{prop} \label{prop Ck alpha estimates}
For each $0<r'<r\le 1$, $0<\alpha' <\alpha<1$ and $k \ge 3$, assume that $u \in C^{2,\alpha} (B^n_{r})$ satisfies (\ref{double divergence form}) and $(F^{jl})$ satisfies the Legendre ellipticity condition (\ref{Lambda uniform definition}), then $u$ is smooth in $B_{r}$ and 
\begin{align} \label{C^k alpha estimates on u}
\|D^k u\|_{C^{\alpha'} (B^n_{r'}) } \le C(k, \Lambda,  \alpha, \alpha',r,r', \|u\|_{W^{2,\infty} (B^n_r)}, \|(\vec F, \vec a, b)\|_{C^{k-3} (B^n_r)}).
\end{align}
\end{prop}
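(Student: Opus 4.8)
The plan is to bootstrap from Proposition \ref{prop C3alpha estimates} by induction on $k$, differentiating the equation and treating each new derivative as a solution of a linear equation of the same structural type. The key observation is that the difference-quotient identity (\ref{double divergence for f}) is exactly the weak form of a \emph{linear} fourth-order double-divergence equation for $f = u^{h_p}$, with leading coefficients $\beta^{ij,kl}$ that inherit the Legendre ellipticity (\ref{Lambda uniform definition}) uniformly in $h$ (since $\beta^{ij,kl}$ is an average of $\partial F^{jl}/\partial u_{ik}$ along the segment from $\xi_0$ to $\xi_h$, and each endpoint satisfies the ellipticity bound). Passing $h\to 0$, one sees that each partial derivative $\partial_p u$ solves a linear equation in divergence form whose coefficients are built from the derivatives of $(F^{jl}, a^k, b)$ evaluated at $(x, Du, D^2u)$.

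First I would set up the base case $k=3$, which is precisely the content of Proposition \ref{prop C3alpha estimates}: under the hypothesis $u\in C^{2,\alpha}$ we already obtain $u\in C^{3,\alpha'}$ on interior balls with the stated estimate, since the data $\Psi$ is controlled by $\|u\|_{W^{2,\infty}}$ and $\|(\vec F,\vec a,b)\|_{C^0}$ through the chain rule. For the inductive step, suppose we have already shown $u\in C^{k,\alpha'}(B^n_{r'})$ with the estimate (\ref{C^k alpha estimates on u}) for some $k\ge 3$. Fix a direction $p$ and consider $v = \partial_p u$. Differentiating the weak formulation (\ref{double divergence form}) in the $x^p$ direction (equivalently, passing to the limit in (\ref{double divergence for f})) shows that $v$ is a weak solution of a linear double-divergence equation
\begin{align*}
\int_{B_r}\big[ \tilde F^{jl}\eta_{jl} + \tilde a^k\eta_k + \tilde b\,\eta\big]\,dx = 0,\qquad \forall\,\eta\in C^\infty_c(B^n_r),
\end{align*}
where $\tilde F^{jl} = \beta_0^{ij,kl} v_{ik} + (\text{lower order})$ with $\beta_0^{ij,kl} = \partial F^{jl}/\partial u_{ik}(x, Du, D^2u)$ as in Proposition \ref{prop C3alpha estimates}, and the remaining terms $\tilde a^k, \tilde b$ are polynomial expressions in the derivatives of $F, a, b$ composed with $(x, Du, D^2u)$. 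By the inductive hypothesis $u\in C^{k,\alpha'}$, so these coefficients lie in $C^{k-2,\alpha'}$, and in particular $\tilde F^{jl}$ has the structure of a $\Lambda$-uniform double-divergence operator for $v$ whose data is controlled in $C^{k-3,\alpha'}$. Applying Proposition \ref{prop C3alpha estimates} to $v$ (or rather its linear analogue, which is proved identically since $\beta_0$ satisfies the same ellipticity) upgrades $v\in C^{3,\alpha'}$ on a slightly smaller ball, hence $u\in C^{k+1,\alpha'}$, together with the quantitative bound obtained by feeding the $C^{k-3}$-norm of the new data — itself controlled by $\|(\vec F,\vec a,b)\|_{C^{k-2}}$ and $\|u\|_{C^{k,\alpha'}}$ — into the estimate. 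Composing with the inductive estimate and shrinking radii at each stage gives (\ref{C^k alpha estimates on u}).

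The bookkeeping to watch is the interplay of radii and regularity exponents: each application of Proposition \ref{prop C3alpha estimates} loses a definite amount in the radius (from $r$ to $r'$) and in the H\"older exponent (from $\alpha$ to $\alpha'$), so across $k-3$ iterations one must absorb these losses into finitely many intermediate radii $r' < \cdots < r$ and exponents, all of which can be fixed at the outset since $k$ is finite. The standard device is to prove the estimate first on the normalized balls $B^n_{1/5}\subset B^n_1$ and then rescale, exactly as in the proof of Proposition \ref{prop C3alpha estimates}.

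The main obstacle, and the only genuinely nonroutine point, is verifying that differentiating the \emph{fully nonlinear} weak form (\ref{double divergence form}) in $x^p$ produces a \emph{linear} equation for $v=\partial_p u$ that again falls under the scope of Proposition \ref{prop C3alpha estimates} — that is, confirming the leading coefficient is $\beta_0^{ij,kl}=\partial F^{jl}/\partial u_{ik}(x,Du,D^2u)$ and that this still satisfies the Legendre condition (\ref{Lambda uniform definition}) so that the linear theory applies with the \emph{same} ellipticity constant $\Lambda$. This is the content already implicit in the difference-quotient computation leading to (\ref{double divergence for f}); the induction simply iterates it, and once the structural invariance of the equation under differentiation is established, the $C^\infty$ conclusion and the estimate (\ref{C^k alpha estimates on u}) follow by a direct, if notation-heavy, induction.
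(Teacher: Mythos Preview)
Your overall strategy matches the paper's: differentiate the weak equation, observe that each derivative satisfies a linear double-divergence equation whose leading coefficient $\beta_0^{ij,kl}=\partial F^{jl}/\partial u_{ik}$ retains the same Legendre ellipticity constant $\Lambda$, then invoke Proposition~\ref{prop C3alpha estimates} and iterate over a chain of intermediate radii and H\"older exponents. The paper carries this out by showing inductively that $f=u_{p_1\cdots p_m}$ satisfies an equation of the required form (with lower-order term $\gamma^{jl}_{(p_1\cdots p_m)}$ defined recursively), applying Proposition~\ref{prop C3alpha estimates} to this $f$ to get $\|D^{m+3}u\|_{C^{\alpha'}}$ controlled by $\|u\|_{C^{m+2,\alpha}}$ and $\|(\vec F,\vec a,b)\|_{C^m}$, and then chaining these one-step gains.

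Your inductive step as written, however, contains a bookkeeping slip. You differentiate once, set $v=\partial_p u$, and then say that applying Proposition~\ref{prop C3alpha estimates} to $v$ ``upgrades $v\in C^{3,\alpha'}$ \ldots\ hence $u\in C^{k+1,\alpha'}$.'' But $v\in C^{3,\alpha'}$ yields only $u\in C^{4,\alpha'}$; it does not advance from $C^{k,\alpha'}$ to $C^{k+1,\alpha'}$ for general $k$. To close the induction you must either (a) differentiate $k-2$ times so that $f=D^{k-2}u\in C^{2,\alpha'}$ by the inductive hypothesis and then apply Proposition~\ref{prop C3alpha estimates} to $f$ --- this is exactly what the paper does --- or (b) differentiate once and apply the \emph{inductive hypothesis}, i.e.\ the already-established case of Proposition~\ref{prop Ck alpha estimates} at level $k$, to $v$. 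Your mention of the ``$C^{k-3}$-norm of the new data'' suggests you had (b) in mind, but you invoked Proposition~\ref{prop C3alpha estimates} instead. Once this is corrected, your argument coincides with the paper's.
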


\begin{proof}
That $u$ is smooth is proved in \cite{BCW}. Taking the limit $h\to 0$ in (\ref{double divergence for f}), we obtain (writing $f = u_p$)
\begin{align} \label{double divergence form for u_p}
\int_{B^n_{r}} \big[ (\beta^{ij,kl}_0 f_{ij} +\gamma^{jl}_p ) \eta_{jl}+ \partial_p \psi^k \eta_k +\partial_p\zeta  \eta \big] dx= 0, \ \ \forall \eta\in C^\infty_c(B^n_r).
\end{align}
This is an equation of the form (\ref{double divergence form}) with the leading term 
\begin{align} 
F^{ij}_{(p)} (x, Df, D^2f) = \beta^{ij,kl}_0 f_{ij} +\gamma^{jl}_p 
\end{align}
and in particular 
\begin{align}
\frac{\partial F^{jl}_{(p)} } {\partial f_{ik}} = \beta^{ij,kl}_0. 
\end{align}
Hence (\ref{double divergence form for u_p}) also satisfies the $\Lambda$-uniform condition (\ref{Lambda uniform definition}).

In general, one can inductively show that for any $p_1, \cdots, p_m$, the function $f = u_{p_1\cdots p_m}$ satisfies 
\begin{align}
\int_{B^n_r }\big[  (\beta^{ij,kl}_0 f_{ik} + \gamma^{jl}_{(p_1\cdots p_m)} ) \eta_{jl} + \psi_{p_1\cdots p_m}^k \eta_k + \zeta_{p_1\cdots p_m} \eta\big] dx = 0, \ \ \forall \eta\in C^\infty_c(B^n_r),
\end{align}
here $\gamma^{jl}_{(p_1\cdots p_m)} $ is defined inductively by 
\begin{align} \label{inductive dfn of gamma}
\gamma^{jl}_{(p_1\cdots p_m)} = \big( \gamma^{jl}_{(p_1\cdots p_{m-1})}\big)_{x_{p_m}}+ \big(\beta^{ij,kl}_0 \big)_{x_{p_m}} u_{ikp_1\cdots p_m}. 
\end{align}
By Proposition \ref{prop C3alpha estimates}, we obtain 
\begin{align} \label{C 3alpha estimates in balls of arbitrary radius for f=up}
\|D^3 f\|_{C^{\alpha'} (B^n_{r'}) } \le C(\Lambda, \alpha, \alpha', r,r', \|f\|_{W^{2,\infty}(B^n_r)}, \|\Psi_{(p_1\cdots p_m)}\|_{C^{\alpha} (B^n_r)}),
\end{align}
where
\begin{align}
\Psi_{(p_1\cdots p_m)} = \big( (\beta_0 ^{ij,kl})_{i,j,k,l}, (\gamma_{(p_1\cdots p_m)}^{jl})_{j,l,p_1,\cdots p_m}, (\partial_{p_1\cdots p_m}\psi^k)_k, \partial_{p_1\cdots p_m} \zeta\big).
\end{align}
Arguing inductively from (\ref{dfn of gamma jl p}), (\ref{inductive dfn of gamma}) that 
\begin{align*}
\| \Psi_{p_1\cdots p_m} \|_{C^\alpha (B^n_r)} \le C(m, \| \vec a\|_{C^m(B^n_r)}, \| F\|_{C^m(B^n_r)}, \|b\|_{C^m (B^n_r)}, \|u\|_{C^{m+2, \alpha}(B^n_r)})
\end{align*}
Hence we have for any $0 <r'<r<1$ and $0 < \alpha' < \alpha<1$, 
\begin{align} \label{inductive estimates on C m+3 alpha of u}
\|u\|_{C^{m+3, \alpha'} (B^n_{r'})} \le C(m,\Lambda, r, r', \alpha, \alpha',\|(\vec F, \vec a, b)\|_{C^m(B^n_r)}, \|u\|_{C^{m+2, \alpha} (B^n_{r})})
\end{align}
The proposition is then proved by choosing 
\begin{align*}
r' &= r_m < r_{m-1} < \cdots < r_1 < r_0 = r, \\
\alpha' &= \alpha_m < \alpha_{m-1} < \cdots < \alpha_1 < \alpha_0 = \alpha
\end{align*}
and applying (\ref{inductive estimates on C m+3 alpha of u}) $m-1$ times (with $r,r', \alpha, \alpha'$ replaced by $r_{j-1}, r_{j},\alpha_{j-1}, \alpha_j$ respectively) and lastly Proposition \ref{prop C3alpha estimates}.
\end{proof}

\section{Local calculations (Symplectic)} \label{Local calculations (Symplectic)}
In this section, we represent locally a Lagrangian immersion as a gradient graph in a Darboux coordinates chart given in Proposition \ref{D coord. with estimates}. We apply the estimates in the previous section. Under a smallness assumption on the $L^n$-norm of the second fundamental form, we derive a $\epsilon$-regularity result (Theorem \ref{thm epsilon regularity}). 

Let $h=h^t_{p,v}$ be a Riemannian metric on $B^{2n}_R$ which satisfies (\ref{eqn estimates on g^t}) for some $t\in (0,R^{-1}]$ and $K_0, K_1$ satisfy (\ref{K_0, K_1 le epsilon_1}). 


 
Let $(x^1, \cdots x^{2n})$ be the standard coordinates on $B^{2n}_R$ and $e_1, \cdots, e_{2n}$ be the standard basis of $\mathbb R^{2n}$. The Christoffel symbols for the Levi-Civita connection of $h$ is given by 
 $$\overline\nabla _{e_i} e_j = \Gamma_{ij} ^k e_k + \Gamma_{ij}^{\bar k} e_{\bar k}, $$
where $\bar k := n+k$ and the repeated indices are summed from $1$ to $n$. 

 From (\ref{eqn estimates on g^t}) we have 
 \begin{align} \label{eqn estimates on Gamma} 
|\Gamma_{ij}^k| + |\Gamma_{ij}^{\bar k}| \le K_1 R^{-1}, \ \ \ \forall \ i,j,k. 
 \end{align}
Let $L$ be a Lagrangian submanifold in $(B^{2n}_R, \omega_0)$ given by a gradient graph 
 \begin{align} \label{eqn dfn lagrangian graph Gamma(u)}
 \Gamma(u):= \{ \Gamma_u (x) : x\in U_R \subset B^n_R \subset \mathbb R^n\},
 \end{align}
 where 
\begin{equation} \label{eqn dfn of Gamma_u}
\Gamma_u(x) = (x, Du(x)).
\end{equation}
for some smooth function $u : U_R \to \mathbb R$, where $U_R \subset B^n_R$. In this case, $L= \Gamma(u)$ satisfies (\ref{dfn HSL for immersion}) if it is a critical point of the area functional 
\begin{align}
\operatorname{Vol} (u) = \int_{U_R} \sqrt{\det g_{ij}} dx,
\end{align}
where $g = \Gamma_u^*h$, among all Hamiltonian variations $t\mapsto u+ t\eta$, where $\eta \in C^\infty_c( U_R)$. The Euler-Lagrange equation has been computed in \cite[Lemma 3.1]{BCW}, which is
 \begin{align} \label{EL for HSL}
 \int_{U_R} \big[ F^{jl} \eta_{jl} + c^k \eta_k \big] dx = 0, \ \ \forall \eta\in C^\infty_c(U_R) 
 \end{align}
 where
 \begin{align}
\label{F^jl formula}
F^{jl} (x, Du, D^2 u) &= \sqrt g g^{ij}\big( ( \delta ^{kl} + \mathcal B_{lk}) u_{ik} + \mathcal C_{li}\big), \\
\label{c^k formula} 
c^k(x, Du, D^2u) &= \frac{1}{2} \sqrt g g^{ij} \big( D_{y_k} \mathcal A_{ij} + 2u_{im} D_{y_k} \mathcal C_{mj} +u_{im} u_{jl} D_{y_k} \mathcal B_{kl}\big),
 \end{align}
 here $\mathcal A$, $\mathcal B$ and $\mathcal C$ are the components of $(h - I_{2n} ) \circ \Gamma_u$ (see (3.2) in \cite{BCW}). Note that $\sqrt g$, $g^{ij}$ can also be represented by $D^2u$, $\mathcal A$, $\mathcal B$ and $\mathcal C$. By (\ref{eqn estimates on g^t}) we obtain the following lemma. 
 
 \begin{lem} \label{F, a bounds by K_j's}
For each $m\in \mathbb N$, there is a constant $C = C(r, m, K_0, \cdots, K_m, \|D^2 u\|_{L^\infty(B^n_r)})$ such that 
 \begin{align}
 \|F^{jl}\|_{C^m(B^n_r)} &\le C,\\
 \|c^k\|_{C^{m-1}(B^n_r)} & \le C, 
 \end{align}
 where $F^{jl}$ and $c^k$ are defined in (\ref{F^jl formula}), (\ref{c^k formula}). 
 \end{lem}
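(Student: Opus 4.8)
The plan is to regard $F^{jl}$ and $c^k$ not as functions of $x$ alone but as smooth functions of the jet variables $(x,p,q)=(x,Du,D^2u)$, and to bound their $C^m$ norms over the region in which $x\in B^n_r$ and $|q|\le \|D^2u\|_{L^\infty(B^n_r)}$ (the admissible range of $p=Du$ being automatically bounded, since the graph $\Gamma_u$ lies in $B^{2n}_R$). This reading is what makes a bound depending only on $\|D^2u\|_{L^\infty}$ possible. The formulas (\ref{F^jl formula}) and (\ref{c^k formula}) display $F^{jl}$ and $c^k$ as smooth algebraic combinations---sums, products, matrix inversion and $\sqrt{\det}$---of three families of building blocks: the metric-deviation components $\mathcal A,\mathcal B,\mathcal C$ together with their vertical derivatives $D_{y_k}\mathcal A,D_{y_k}\mathcal B,D_{y_k}\mathcal C$; the Hessian entries $u_{ik}$; and the derived quantities $\sqrt g$ and $g^{ij}$. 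Since $C^m$ is a Banach algebra under multiplication and is preserved under composition with a smooth function having bounded derivatives on the relevant compact set, it suffices to bound each building block in $C^m$ and then combine.

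First I would dispose of the metric and Hessian terms. Because $h_0=I_{2n}$ in the standard coordinates, $\mathcal A,\mathcal B,\mathcal C$ are exactly the components of $(h-h_0)\circ\Gamma_u$; by (\ref{eqn estimates on g^t}) and $t\le 1$ we have $|h-h_0|\le K_0$ and $|\partial_j(h-h_0)|\le K_j$ for $1\le j\le m$, so the $C^m$ norm of $\mathcal A,\mathcal B,\mathcal C$ in the ambient (hence $(x,p)$) variables is controlled by $K_0,\dots,K_m$. The entries $u_{ik}=q_{ik}$ are coordinate functions on $q$-space with $C^m$ norm at most $1+\|D^2u\|_{L^\infty}$. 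The vertical derivatives $D_{y_k}\mathcal A$, etc., cost exactly one order of differentiation, which is precisely why $c^k$ is controlled only in $C^{m-1}$ while still requiring $K_m$.

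The main---and essentially only---point requiring work is the control of $\sqrt g$ and $g^{ij}$, since these are algebraic rather than polynomial in the data. Writing $\partial_i\Gamma_u=e_i+u_{ik}e_{\bar k}$, one has $g_{ij}=h(\partial_i\Gamma_u,\partial_j\Gamma_u)$, a polynomial of degree two in the $u_{ik}$ whose coefficients are the ambient metric components along $\Gamma_u$; hence $\|g_{ij}\|_{C^m}$ is bounded by the previous step and $\|D^2u\|_{L^\infty}$. The crucial quantitative input is a \emph{uniform} lower bound on $\det g$. Since $|h-h_0|\le K_0<1/2$ by (\ref{eqn estimates on g^t}) and (\ref{K_0, K_1 le epsilon_1}), the ambient metric satisfies $h\ge\tfrac12 h_0$, and because the horizontal part of $\partial_i\Gamma_u$ is the identity,
\[
g(\xi,\xi)=h\big(\xi^i\partial_i\Gamma_u,\ \xi^j\partial_j\Gamma_u\big)\ge \tfrac12\,|\xi^i\partial_i\Gamma_u|^2\ge \tfrac12\,|\xi|^2 ,
\]
so $g\ge\tfrac12 I$ and $\det g\ge 2^{-n}$, \emph{independently} of $\|D^2u\|$. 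Consequently the maps $g\mapsto g^{-1}$ and $g\mapsto\sqrt{\det g}$ are smooth on the closed region $\{\,g\ge\tfrac12 I,\ |g|\le C(\|D^2u\|_{L^\infty})\,\}$, with all derivatives bounded there; composing with $g_{ij}$ and applying the Fa\`a di Bruno (chain) rule yields the sought $C^m$ bounds on $\sqrt g$ and $g^{ij}$.

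Finally I would assemble the estimate from (\ref{F^jl formula}) and (\ref{c^k formula}) by the product rule: $F^{jl}$ is a finite sum of products of the $C^m$-bounded factors $\sqrt g$, $g^{ij}$, $\mathcal B$, $u_{ik}$, $\mathcal C$, giving $\|F^{jl}\|_{C^m(B^n_r)}\le C$; and $c^k$ is a similar sum involving the once-further-differentiated factors $D_{y_k}\mathcal A,D_{y_k}\mathcal C,D_{y_k}\mathcal B$, giving $\|c^k\|_{C^{m-1}(B^n_r)}\le C$, with $C=C(r,m,K_0,\dots,K_m,\|D^2u\|_{L^\infty(B^n_r)})$ in both cases. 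I expect the determinant lower bound---the only non-algebraic ingredient, resting on the positive-definiteness of $h$ through $K_0<1/2$---to be the step deserving the most care; everything else is bookkeeping with the product and chain rules.
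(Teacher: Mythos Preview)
Your proposal is correct and is precisely the natural unfolding of what the paper leaves implicit: in the text the lemma is stated with only the one-line justification ``By (\ref{eqn estimates on g^t}) we obtain the following lemma,'' so there is essentially no proof to compare against. Your argument---reading $F^{jl},c^k$ as smooth functions of the jet variables, bounding $\mathcal A,\mathcal B,\mathcal C$ and their $y$-derivatives via the $K_j$, and securing the uniform positivity $\det g\ge 2^{-n}$ from $K_0<1/2$ so that $g\mapsto g^{-1}$ and $g\mapsto\sqrt{\det g}$ are smooth on the relevant compact set---is exactly the intended content of that sentence, including the reason $c^k$ loses one order relative to $F^{jl}$.
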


 By (\ref{K_0, K_1 le epsilon_1}) we have $|Dh|<\epsilon_1$. By \cite[Lemma 3.2]{BCW}, if 
 \begin{equation} \label{eqn dfn of epsilon_1}
 \| D^2 u\|_{C^0(B^n_r)}\le \epsilon_1,
 \end{equation}
 then (\ref{EL for HSL}) is $\Lambda$-uniform with $\Lambda = 1/2$. Hence we can apply Proposition \ref{prop Ck alpha estimates} and Lemma \ref{F, a bounds by K_j's} to obtain
 
\begin{thm} \label{Theorem higher order estimates for u satisfying HSL}
Let $u:B^n_r \to \mathbb R$ be a smooth function which satisfies $u(0) = Du(0)= D^2u (0) = 0$ and (\ref{eqn dfn of epsilon_1}). If the graph $\Gamma (u)$ is Hamiltonian stationary Lagrangian in $(B_r^{2n}, \omega, h)$, where $h$ satisfies (\ref{eqn estimates on g^t}) and $K_0, K_1$ satisfy (\ref{K_0, K_1 le epsilon_1}). Then for each $k\ge 3$ and $x\in B^n_{r/2}$,
\begin{align} \label{higher order estimates for u satisfying HSL}
|D^k u(x) |\le C(k, r, K_0, \cdots, K_{k-3}). 
\end{align}
\end{thm}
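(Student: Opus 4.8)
The plan is to recognize the Hamiltonian stationary condition (\ref{EL for HSL}) as an instance of the weak double divergence equation (\ref{double divergence form}) and then to feed it into the a priori estimate of Proposition \ref{prop Ck alpha estimates}. Comparing (\ref{EL for HSL}) with (\ref{double divergence form}) shows that $\Gamma(u)$ being HSL means exactly that $u$ is a weak solution with leading coefficient $F^{jl}$ given by (\ref{F^jl formula}), first order coefficient $a^k = c^k$ given by (\ref{c^k formula}), and $b\equiv 0$. Since $u$ is already assumed smooth, the regularity half of Proposition \ref{prop Ck alpha estimates} is not needed; what I want is its quantitative conclusion (\ref{C^k alpha estimates on u}), which for $r' = r/2$ reads
\begin{align*}
\|D^k u\|_{C^{\alpha'}(B^n_{r/2})} \le C\big(k,\Lambda,\alpha,\alpha',r,\|u\|_{W^{2,\infty}(B^n_r)},\|(\vec F,\vec a,b)\|_{C^{k-3}(B^n_r)}\big).
\end{align*}
The whole proof then reduces to bounding the quantities entering on the right — the ellipticity constant $\Lambda$, the norm $\|u\|_{W^{2,\infty}(B^n_r)}$, and the coefficient norm — by constants depending only on $r$ and finitely many of the $K_j$'s.

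First I would verify ellipticity. By hypothesis (\ref{eqn dfn of epsilon_1}) we have $\|D^2u\|_{C^0(B^n_r)}\le \epsilon_1$, and by (\ref{K_0, K_1 le epsilon_1}) also $|Dh|<\epsilon_1$; hence \cite[Lemma 3.2]{BCW} applies and (\ref{EL for HSL}) is $\Lambda$-uniform in the sense of (\ref{Lambda uniform definition}) with the fixed value $\Lambda = 1/2$. The crucial feature is that this ellipticity constant is universal, independent of $u$. Next I would control $\|u\|_{W^{2,\infty}(B^n_r)}$ using the normalization at the origin. The bound $\|D^2u\|_{C^0(B^n_r)}\le\epsilon_1$ is given; integrating it along rays from $0$ and using $Du(0)=0$ yields $\|Du\|_{C^0(B^n_r)}\le \epsilon_1 r$, and integrating once more with $u(0)=0$ yields $\|u\|_{C^0(B^n_r)}\le \tfrac{1}{2}\epsilon_1 r^2$. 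Since $\epsilon_1$ is the fixed constant from (\ref{K_0, K_1 le epsilon_1}), this gives $\|u\|_{W^{2,\infty}(B^n_r)}\le C(r)$. This is precisely where the hypotheses $u(0)=Du(0)=0$ are used.

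Finally I would bound the coefficients by Lemma \ref{F, a bounds by K_j's}. Choosing the index there appropriately bounds $\|F^{jl}\|_{C^{k-3}(B^n_r)}$ and $\|c^k\|_{C^{k-3}(B^n_r)}$ by a constant depending only on $r$, the $K_j$'s up to the order dictated by $k$, and $\|D^2u\|_{L^\infty(B^n_r)}\le \epsilon_1$; since the latter is universal, these are constants of the form $C(k,r,K_0,\dots)$. Substituting the ellipticity $\Lambda=1/2$, the $W^{2,\infty}$ bound, and these coefficient bounds into (\ref{C^k alpha estimates on u}), and discarding the $C^{\alpha'}$ seminorm in favour of the sup-norm, yields the pointwise bound (\ref{higher order estimates for u satisfying HSL}) on $B^n_{r/2}$.

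The argument is essentially an assembly of Proposition \ref{prop Ck alpha estimates} and Lemma \ref{F, a bounds by K_j's}, so there is no single hard analytic step; the real content is the bookkeeping of constants. What guarantees that the final constant depends only on $(k,r)$ and the $K_j$'s — and not on $u$ itself — is the uniform ellipticity $\Lambda=\tfrac12$ together with the universal sup-bound $\|D^2u\|_{C^0}\le\epsilon_1$, both supplied by the smallness assumption (\ref{eqn dfn of epsilon_1}) and the scaling normalization (\ref{K_0, K_1 le epsilon_1}). The point demanding the most attention is tracking how many spatial derivatives of the metric $h$ propagate through the fibre derivative $D_{y_k}$ appearing in $c^k$, so that the order of the $K_j$'s entering via Lemma \ref{F, a bounds by K_j's} matches the range claimed in the statement.
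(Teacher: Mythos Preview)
Your proposal is correct and follows essentially the same route as the paper: the authors likewise obtain the theorem by combining Proposition~\ref{prop Ck alpha estimates} with Lemma~\ref{F, a bounds by K_j's}, after invoking \cite[Lemma 3.2]{BCW} to secure the uniform ellipticity $\Lambda=1/2$. Your write-up is in fact more explicit than the paper's, in particular spelling out how the normalization $u(0)=Du(0)=0$ is used to control $\|u\|_{W^{2,\infty}(B^n_r)}$.
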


 \subsection{Curvature estimates, $\epsilon$-regularity}
Next we use the result in the previous subsection to derive local curvature estimates for HSL immersions in $(M, \omega, J, h)$ with small $L^n$-norm of the second fundamental form. 

Using the Darboux coordinates in subsection \ref{subsection darboux with estimates}, one can think of $L\cap \mathscr B_r(p)$ as a Lagrangian immersion in $(B^{2n}_r, \omega_0)$. We first derive an inequality comparing the second fundamental form of $L$ computed using two different Riemannian metrics. 

\begin{prop} \label{prop A^i inequality for different h_i}
 Let $k, K\in \mathbb N$ and $k <K$. Let $L$ be a $k$-dimensional immersed submanifold in an open subset $U$ of $\mathbb R^K$ and let $h_1$ be a Riemannian metrics on $U$ so that $\|h_1- I\|_{C^0(U)} + \|Dh_1\|_{C^0(U)}< \epsilon$ for some positive number $\epsilon<1/2$. Let $A_1, A_0$ be the second fundamental form of $L$ calculated with respect to $h_1$ and the Euclidean metric $h_0 = I$ respectively. Then there is $C = C(K)$ so that 
 $$|A_0(X, X)| \le (1+C\epsilon)|A_1(X, X)|_{h_1} + C\epsilon|X|_{h_1}^2$$ 
 for all $x\in L$ and $X\in T_xL$. 
\end{prop}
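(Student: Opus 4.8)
The plan is to compare the two second fundamental forms by tracking how each ingredient of the Gauss formula $A(X,X) = (\overline\nabla_X \widetilde X)^\perp$ changes when the metric is perturbed from the Euclidean $h_0 = I$ to $h_1$. The second fundamental form $A_i(X,X)$ is the $h_i$-orthogonal projection onto the normal space of the ambient covariant derivative $\overline\nabla^i_X \widetilde X$, where $\overline\nabla^i$ is the Levi-Civita connection of $h_i$. Two things differ between $i=0$ and $i=1$: the connection (through its Christoffel symbols) and the normal projection operator. First I would quantify both differences in terms of $\epsilon$.

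For the connection, the Christoffel symbols of $h_1$ in the standard coordinates are built algebraically from $h_1^{-1}$ and $Dh_1$; since $\|h_1 - I\|_{C^0} < \epsilon < 1/2$ we have $\|h_1^{-1}\|_{C^0} \le C(K)$, and $\|Dh_1\|_{C^0} < \epsilon$, so the Christoffel symbols of $h_1$ satisfy $|\Gamma^1| \le C(K)\epsilon$ while those of $h_0$ vanish. Writing $\widetilde X = X^a e_a$ in the ambient frame,
\begin{align*}
\overline\nabla^1_X \widetilde X - \overline\nabla^0_X \widetilde X = \Gamma^{1,c}_{ab} X^a X^b e_c,
\end{align*}
so $|\overline\nabla^1_X \widetilde X - \overline\nabla^0_X \widetilde X| \le C(K)\epsilon |X|^2$, where $|X|$ is the Euclidean norm; comparability of the two norms (again from $\|h_1 - I\|_{C^0}<\epsilon$) lets me replace $|X|^2$ by $C|X|_{h_1}^2$ at the cost of another constant.

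For the projection, I would observe that the two normal spaces are the $h_0$- and $h_1$-orthogonal complements of the same tangent space $T_xL$. I would decompose the Euclidean-normal component of a vector $W = \overline\nabla^0_X\widetilde X$ as
\begin{align*}
W^{\perp_0} = W^{\perp_1} + (W^{\perp_0} - W^{\perp_1}),
\end{align*}
and bound the difference of the two projection operators acting on any fixed $W$ by $C(K)\epsilon |W|$, since the orthogonal projection onto a subspace depends smoothly (Lipschitz in the metric, uniformly for $\|h_1-I\|<1/2$) on the metric used to define orthogonality, and the two metrics differ by $\epsilon$. The essential point is that $\|h_1 - I\|_{C^0} < \epsilon$ bounds this operator-norm difference.

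Assembling the pieces: $A_0(X,X) = (\overline\nabla^0_X\widetilde X)^{\perp_0}$, and I estimate it against $A_1(X,X) = (\overline\nabla^1_X\widetilde X)^{\perp_1}$ by inserting and removing the mixed term $(\overline\nabla^0_X\widetilde X)^{\perp_1}$. The connection difference contributes a normal vector of size $\le C\epsilon|X|_{h_1}^2$ (after projecting, which is norm-nonincreasing up to the metric comparison factor $1+C\epsilon$), and the projection difference contributes $\le C\epsilon|\overline\nabla^1_X\widetilde X|$, which I control by $C\epsilon(|A_1(X,X)|_{h_1} + |X|_{h_1}^2)$ after splitting $\overline\nabla^1_X\widetilde X$ into its $h_1$-tangential and $h_1$-normal parts — the tangential part is first order in $X$ and bounded by $C|X|_{h_1}^2$ only if I am careful, so I must keep the tangential derivative term absorbed into the $|X|_{h_1}^2$ error. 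Collecting the leading term $|A_1(X,X)|_{h_1}$ with coefficient $(1+C\epsilon)$ coming from the metric comparison and all the $\epsilon$-order errors into $C\epsilon|X|_{h_1}^2$ yields the claimed inequality.

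The main obstacle I anticipate is the bookkeeping of the projection-difference term: the $h_1$-tangential part of $\overline\nabla^1_X\widetilde X$ is not itself controlled by $A_1$ but only by the full first covariant derivative, so I must verify that it contributes only to the quadratic error $C\epsilon|X|_{h_1}^2$ and not spuriously to the $A_1$ term. Making the dependence of the orthogonal projection on the metric quantitatively Lipschitz (uniformly over the allowed range $\|h_1-I\|<1/2$, which guarantees $h_1$ stays uniformly positive definite so the complement has a uniform lower angle bound with $T_xL$) is the one genuinely analytic step; everything else is the algebra of Gauss formulas and norm comparisons.
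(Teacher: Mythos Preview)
Your overall strategy matches the paper's: decompose $A_0(X,X)$ using the difference of connections (controlled by Christoffel symbols, hence by $\|Dh_1\|_{C^0}$) and the difference of normal projections (controlled by $\|h_1-I\|_{C^0}$). The paper carries out both estimates essentially as you describe.

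However, the obstacle you flag at the end is a genuine gap, and you do not close it. When you apply the projection difference $(\pi^{\perp_0}-\pi^{\perp_1})$ to $\overline\nabla^1_X\widetilde X$ (or to $\overline\nabla^0_X\widetilde X$), the resulting error is $C\epsilon|\overline\nabla^1_X\widetilde X|$, and the $h_1$-tangential part of $\overline\nabla^1_X\widetilde X$ is the intrinsic covariant derivative of the extension $\widetilde X$ along $X$ in $L$. This depends on the \emph{choice of extension} and is not bounded by $|X|_{h_1}^2$ for an arbitrary extension; your remark that it is ``bounded by $C|X|_{h_1}^2$ only if I am careful'' does not explain what being careful means, and for a generic extension the claim is simply false.

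The paper's resolution is a one-line trick you are missing: since $A_0(X,X)$ and $A_1(X,X)$ are tensorial in $X$, at a fixed point $x$ one is free to choose the extension $\widetilde X$ so that $(\overline\nabla^1_X\widetilde X)^\top = 0$ at $x$, i.e.\ $\overline\nabla^1_X\widetilde X = A_1(X,X)$ at $x$ (extend $X$ by $h_1$-parallel transport in $L$, for instance). With this choice the projection-difference term becomes $(\pi^{\perp_0}-\pi^{\perp_1})(A_1(X,X))$, which is immediately bounded by $C\epsilon|A_1(X,X)|_{h_1}$ and folds into the $(1+C\epsilon)|A_1(X,X)|_{h_1}$ coefficient. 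Once you insert this choice of extension, your argument goes through exactly as in the paper.
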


\begin{proof}
Let $\nabla^1, \nabla^0$ be the Levi-Civita connections of $h_1, h_0$ respectively. Let $\mathscr C$ be the tensor $\mathscr C(X, Y) = \nabla^1_XY - \nabla^0_XY$. Let $x\in L$ and let $\pi^\perp_\alpha$ be the orthogonal projection onto the orthogonal complement of $T_xL$ with respect to $h_\alpha$ for $\alpha = 1,0$. Let $\{ e_1, \cdots, e_{K}\}$ be a basis of $\mathbb R^{K}$ so that $\{e_1, \cdots, e_k\}$ forms a basis for $T_xL$. Then
$$ \pi^{\perp}_\alpha (Z) = Z - \sum_{a,b=1}^k H^{ab}_\alpha h_\alpha (Z, e_a) e_b, \ \ \ \alpha = 1,0 .$$
Here $(H_\alpha)_{ab} : = h_\alpha (e_a, e_b)$ and $(H^{ab}_\alpha)$ is the inverse matrix of $(H_\alpha)_{ab}$. Hence 
\begin{align*}
    (\pi^\perp_1 - \pi^\perp_0)Z &= \sum_{a,b=1}^k \big( H^{ab}_1 h_1 (Z, e_a) - H^{ab}_0 h_0(Z, e_a) \big) e_b \\
    &=\sum_{a,b=1}^k \big( (H^{ab}_1-H_0^{ab})h_1(Z, e_a) + H^{ab}_0(h_1(Z, e_a) - h_0(Z, e_a))\big)e_b.
\end{align*}
Write $e_a = (e_a^1, \cdots, e_a^K)$ and $Z = (Z^1, \cdots, Z^K)$, then 
\begin{align*}
    (H_\alpha)_{ab} = \sum_{i,j=1}^K e_a^i e_b^j (h_\alpha)_{ij}, \ \ h_\alpha (Z, e_a) &= \sum_{i,j=1}^K Z^i e_a^j (h_\alpha)_{ij}. 
\end{align*}
This implies 
\begin{align*}
    |H_1^{ab} - H_0^{ab}|&\le C \|h_1 - h_0\|_{C^0(U)}, \\
    |h_1(Z, e_a)-h_0(Z, e_a)| &\le C \| h_1 - h_0\|_{C^0(U)} |Z|
\end{align*}
and
\begin{equation} \label{eqn norm bound of pi_1^perp - pi^perp_2}
| (\pi^\perp_1 - \pi^\perp_2)Z| \le C\|h_1-h_0\|_{C^0(U)} |Z|.
\end{equation}
for some $C = C(K)$. Let $X, Y$ be any tangent vector fields of $L$ defined on some open neighborhood of $x$. Then 
\begin{align*}
    A_0(X, Y) &= \pi^\perp_0 (\nabla^0_XY)  \\
    &= \pi^\perp_0 (-\mathscr C(X, Y) +\nabla^1_XY) \\
    &= -\pi^\perp_0\mathscr C(X, Y) +\pi^\perp_1 \nabla^1_XY + (\pi^\perp_0- \pi^\perp_1) (\nabla^1_XY) \\
    &= A_1(X, Y) - \pi^\perp_0\mathscr C(X, Y)+(\pi^\perp_0- \pi^\perp_1) (\nabla^1_XY). 
\end{align*}
Now let $X$ be chosen such that $\nabla^1 _XX = A_1(X, X)$ at $x$. Using (\ref{eqn norm bound of pi_1^perp - pi^perp_2}), at $x$ we have
\begin{align*}
     |A_0(X, X)| &\le |A_1(X, X)| + |\mathscr C(X, X)| + C\|h_1-h_2\|_{C^0(U)} |A_1(X, X)|\\
     &\le (1+C\epsilon) |A_1(X, X)|_{h_1} + |\mathscr C (X, X)|. 
\end{align*}
For some $C= C(K)$. To estimates $\mathscr C (X, X)$, note that locally
$$\mathscr C_{ij}^k = (\Gamma_1)_{ij}^k - (\Gamma_0)_{ij}^k = (\Gamma_1)_{ij}^k, $$
where $(\Gamma_1)_{ij}^k$ are the Christoffel symbols of $h_1$. Since $\|Dh_1\|_{C^0(U)}<\epsilon$ by assumption and $(\Gamma_1)_{ij}^k = h^{-1}_1 * Dh_1$, 
$$ |\mathscr C(X, X)|\le C\epsilon |X|_{h_1}^2$$
for some $C = C(K)$. This finishes the proof of Proposition \ref{prop A^i inequality for different h_i}. 
\end{proof}

\begin{lem} \label{lem graphical C^2 estimets}
Let $U$ be an open subset in $\mathbb R^{2n}$ and let $L$ be an immersed Lagrangian submanifold in $(U, \omega_0, h, J)$, so that $h$ satisfies (\ref{eqn estimates on g^t}) and the $C^0$-norm of the second fundamental form of $L$ is bounded by $C_A$. Then there is $C_1$ depending on $n, K_0, K_1$ so that the following holds: for any $\epsilon \in (0,1]$, write $r = \epsilon/(C_1(C_A+1))$. For any $p\in L\cap U$ so that $B^{2n}_r (p) \subset U$, any embedded connected component $L^i$ of $L\cap B^{2n}_ r (p)$ containing $p$ can be written as a gradient graph $\Gamma_{u}$, where $u$ is a function defined on $U_i\subset T_p L^i$,  $B^n_{r/2}\subset U_i $ and $u$ satisfies 
\begin{equation} \label{eqn u (0) = Du(0) = D^2 u(0) = 0}
u (0) = Du(0) = D^2 u (0) = 0
\end{equation}
and 
\begin{equation} \label{eqn |D^2 u (x)| le epsilon}
    \|D^2 u \|_{C^0(U_i)} \le \epsilon.
\end{equation}
\end{lem}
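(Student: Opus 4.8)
The plan is to reduce the curvature hypothesis to a bound on the \emph{Euclidean} second fundamental form and then invoke the classical principle that a submanifold with bounded Euclidean second fundamental form is graphical over its tangent plane on a ball of definite size, with quantitative $C^2$ control. Since $h$ satisfies (\ref{eqn estimates on g^t}) with $K_0,K_1$ as in (\ref{K_0, K_1 le epsilon_1}), the metric is $C^1$-close to the Euclidean one, with $\|h-I\|_{C^0(U)}+\|Dh\|_{C^0(U)}$ controlled by $K_0,K_1$ (and the scaling parameter $t$). I would apply Proposition \ref{prop A^i inequality for different h_i} with $h_1=h$; together with the hypothesis $|A(X,X)|_h\le C_A|X|_h^2$ and $h\le 2h_0$, this gives a bound on the Euclidean second fundamental form $A_0$ of the form $\|A_0\|_{C^0}\le \Lambda_0:=C_2(C_A+1)$, with $C_2=C_2(n,K_0,K_1)$.

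Next I would normalize. Translating so that $p=0$ and applying an element of $U(n)$ (which preserves $\omega_0,h_0$, hence $\Lambda_0$), I arrange $T_pL^i=\mathbb R^n\times\{0\}$. Because $L^i$ is Lagrangian with $\iota^*\omega_0=0$, near $0$ it is the graph of a closed one-form, hence a gradient graph $\Gamma_u$; since the tangent plane of $\Gamma_u$ at $x$ is spanned by $(e_i,D^2u(x)e_i)$ and $0\in\Gamma_u$ with horizontal tangent plane, one reads off $Du(0)=0$ and $D^2u(0)=0$, and $u(0)=0$ after adding a constant, giving (\ref{eqn u (0) = Du(0) = D^2 u(0) = 0}). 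The remaining task is the quantitative bound (\ref{eqn |D^2 u (x)| le epsilon}) together with $B^n_{r/2}\subset U_i$.

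The key mechanism is that the tangent plane of $L^i$ rotates at rate at most $\Lambda_0$ along unit-speed curves, so the tilt $\theta$ of $T_qL^i$ from the horizontal is at most $\Lambda_0$ times the intrinsic distance from $p$, and $\|D^2u\|=\tan\theta$ under the graph representation. I would run a continuity argument over the radius $\rho$ of the horizontal disk: wherever $L^i$ is graphical over $B^n_\rho$ with $\|D^2u\|\le 1/2$, the graph formula for the second fundamental form gives $\|D^3u\|\le C\Lambda_0$ (the relevant coefficients, involving $(1-\|D^2u\|^2)^{-1}$, stay bounded), whence integrating along radial segments from $0$ yields $\|D^2u(x)\|\le C\Lambda_0|x|\le C\Lambda_0\rho$. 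Choosing $C_1=C_1(n,K_0,K_1)$ large enough that, with $r=\epsilon/(C_1(C_A+1))$, one has $C\Lambda_0(r/2)=CC_2\epsilon/(2C_1)\le\min\{\epsilon/2,1/4\}$, the threshold $\|D^2u\|\le 1/2$ is never saturated, and the confinement $|Du(x)|\le\tfrac12|x|\le r/4$ keeps $\Gamma_u|_{B^n_\rho}$ strictly inside $B^{2n}_r(0)$; hence $\rho$ can be increased up to $r/2$. This produces the graph over $B^n_{r/2}$ with $\|D^2u\|\le\epsilon$, so $B^n_{r/2}\subset U_i$.

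The delicate point, which I expect to be the main obstacle, is upgrading this to the entire embedded component $L^i=\Gamma_u$ over $U_i=\pi(L^i)$, with (\ref{eqn |D^2 u (x)| le epsilon}) holding on all of $U_i$ rather than only on $B^n_{r/2}$. Since $L^i$ is only confined \emph{extrinsically} to $B^{2n}_r(p)$, one must rule out that it folds or winds so as to acquire large intrinsic diameter. This is handled by the same Gauss-map estimate run as an open--closed argument on $L^i$: because $\Lambda_0 r=C_2\epsilon/C_1$ is small, the tangent planes cannot rotate enough within an extrinsically $B_r$-confined connected piece to fold back, so every tangent plane stays within small tilt of the horizontal, the projection $\pi|_{L^i}$ is injective, and $\|D^2u\|=\tan(\mathrm{tilt})\le\epsilon$ throughout $U_i$. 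Tracking that all constants depend only on $n,K_0,K_1$ through $C_2$, and not on $C_A$ or $\epsilon$, is the final bookkeeping point.
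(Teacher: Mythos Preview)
Your approach is essentially the same as the paper's: both apply Proposition~\ref{prop A^i inequality for different h_i} to transfer the curvature bound to the Euclidean metric, then use the principle that bounded Euclidean second fundamental form forces the embedded component through $p$ to be a graph over $T_pL^i$ with small $C^1$ norm, and finally invoke the Lagrangian condition to upgrade this to a gradient graph. The only difference is that the paper obtains the graphical step as a black box by citing \cite[Theorem~2.6]{B} (see also \cite[Theorem~2.4]{Langer}), whereas you sketch its proof via a tilt/continuity argument; the ``delicate point'' you flag---that an embedded connected piece confined to $B^{2n}_r$ cannot fold back once $\Lambda_0 r$ is small---is exactly the content of those cited theorems, so your open--closed sketch is on the right track but the paper's citation is the cleaner way to close it.
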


\begin{proof}
Apply Proposition \ref{prop A^i inequality for different h_i} with $h_1 = h$, we conclude that $|A_0|\le c_2 (C_A +1)$, where $A_0$ is the second fundamental form of $L$ in $(U, h_0)$ and $c_2$ depends only on $n, K_0, K_1$. By \cite[Theorem 2.6]{B} (see also \cite[Theorem 2.4]{Langer}), there is a dimensional constant $c$ so that if $L^i$ is an embedded connected component of $L\cap B^{2n}_{c\epsilon (C(C_A+1))^{-1}}(p)$ containing $p$, then $L^i$ is graphical: there is $X_i : U_i \to \mathbb R^n$, where $B^{n}_{{c\epsilon (2c_2(C_A+1))^{-1}}} (p) \subset U_i$, so that 
$$ L^i = \{ (x, X_i(x) ) : x\in U_i\}$$
with $X_i(0) = DX_i (0) = 0$ and $|X_i(x)|\le \epsilon$ for all $x\in U_i$. Since $L^i$ is Lagrangian, $X_i = Du_i$ for some smooth function $u_i : U_i \to \mathbb R$ with $u_i(0)=0$. This finishes the proof of the lemma, by choosing $C_1 =2c_2/c$. 
\end{proof}

The following lemma can be proved by a direct computation, using the bounds on $D^2 u$, $h-I$ and $Dh$.
 \begin{lem} \label{lem graphical C^3 estimets}
With the same assumption as in Lemma \ref{lem graphical C^2 estimets}, there is $C_2>0$ depending only on $n$, $K_0, K_1$ such that
 \begin{align}
|D^3 u(x)| \le C_2(| A(x)| +1), \ \ \ |A(x)| \le C_2( |D^3 u(x)| +1) 
 \end{align}
 whenever $|x|\le  \epsilon_1 (C_1(C_A +1))^{-1}$.  
 \end{lem}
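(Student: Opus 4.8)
The plan is to represent the second fundamental form directly in the coordinate frame of the gradient graph and read off its relation to $D^3u$ from formula (\ref{formula for A}). On the region where Lemma \ref{lem graphical C^2 estimets} applies, $L$ is the graph $\Gamma_u(x) = (x, Du(x))$ of a function $u$ with $\|D^2u\|_{C^0}\le \epsilon_1$, and a frame of tangent vectors along $L$ is $e_i = \partial_i + u_{il}\,\partial_{\bar l}$, where $\bar l = n+l$ and repeated indices are summed from $1$ to $n$. I would compute the cubic form $A_{ijk} := A(e_i, e_j, e_k)$ using the expression $A(X,Y,Z) = \omega_0(Z, \overline\nabla_X \widetilde Y)$ in (\ref{formula for A}); the key feature is that the symplectic form here is the \emph{fixed} standard form $\omega_0$, so none of the $h$-dependent data distorts the pairing.

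First I would split the ambient connection as $\overline\nabla_{e_i} e_j = \overline\nabla^0_{e_i} e_j + \mathscr C(e_i, e_j)$, where $\overline\nabla^0$ is the flat Euclidean connection and $\mathscr C$ is the difference tensor whose components are the Christoffel symbols of $h$. Since $u$ depends only on the base variables, $\overline\nabla^0_{e_i} e_j = (\partial_i u_{jl})\,\partial_{\bar l} = u_{ijl}\,\partial_{\bar l}$, while by (\ref{eqn estimates on Gamma}) and (\ref{K_0, K_1 le epsilon_1}) one has $|\mathscr C(e_i,e_j)| \le C\epsilon_1$ (using $|e_i|\le C$, which follows from $\|D^2u\|\le\epsilon_1$). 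Pairing with $\omega_0(e_k,\cdot)$ and using $\omega_0(e_k, \partial_{\bar l}) = \omega_0(\partial_k, \partial_{\bar l}) + u_{km}\,\omega_0(\partial_{\bar m}, \partial_{\bar l}) = \delta_{kl}$ (the $\partial_{\bar\cdot}$-block being $\omega_0$-isotropic), I obtain
\begin{equation*}
A_{ijk} = u_{ijk} + \omega_0\big(e_k,\, \mathscr C(e_i,e_j)\big), \qquad \big|A_{ijk} - u_{ijk}\big| \le C\epsilon_1,
\end{equation*}
with $C = C(n)$. The essential point is that the error is purely \emph{additive} and bounded by a constant, with no term proportional to $|D^3u|$.

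It then remains to pass between this componentwise estimate and the intrinsic norms $|A|$ and $|D^3u|$. The induced metric $g_{ij} = h(e_i,e_j)$ satisfies $g = I + O(\epsilon_1)$ by (\ref{eqn estimates on g^t}), (\ref{K_0, K_1 le epsilon_1}) and $\|D^2u\|\le\epsilon_1$, so $g^{-1}$ is uniformly comparable to the identity and $|A|_g$ is comparable, with constants depending only on $n$, to the Euclidean norm of the array $(A_{ijk})$. Combining this with the componentwise bound gives $|A| \le C(|D^3u| + 1)$ and $|D^3u| \le C(|A| + 1)$, where the constant absorbs both the additive error $C\epsilon_1$ and the norm-comparison factors; this yields the two inequalities with a single $C_2 = C_2(n, K_0, K_1)$.

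I expect the only real subtlety to be the choice of pairing in the first step. Using the metric form $h(\overline\nabla_X\widetilde Y, JZ)$ with the $h$-compatible (hence non-standard) $J$ would produce spurious multiplicative corrections of size $\epsilon_1|D^3u|$, whose absorption would require $\epsilon_1$ to be small relative to the dimensional constants. Working instead with the fixed $\omega_0$ sidesteps this entirely and keeps the error additive, so that no smallness of $\epsilon_1$ beyond (\ref{K_0, K_1 le epsilon_1}) is needed. Everything else is routine bookkeeping with the uniform bounds on $D^2u$, $h - I$ and $Dh$.
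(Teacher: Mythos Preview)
Your proposal is correct and is precisely the ``direct computation, using the bounds on $D^2u$, $h-I$ and $Dh$'' that the paper invokes without details; the key identity $A_{ijk} = u_{ijk} + \omega_0(e_k,\mathscr C(e_i,e_j))$ together with the metric comparison $g = I + O(\epsilon_1)$ is exactly the intended argument. Your closing remark slightly overstates the subtlety---using $h(\overline\nabla_X\widetilde Y, JZ)$ would produce a multiplicative factor $1+O(\epsilon_1)$ on $u_{ijk}$, which is still harmless since $\epsilon_1<1/2$---but working with the fixed $\omega_0$ is indeed the cleaner choice.
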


From the above lemmas we obtain the following proposition. 
 
\begin{prop} \label{prop split into embedded components}
Let $L$ be a properly immersed Lagrangian submanifold in $M$. Let $p\in M$ and let $\delta, C_A$ be positive numbers. Assume that the $C^0$-norm of the second fundamental form $A$ of $L$ is bounded above by $C_A$ in the Darboux ball $\mathscr B_\delta (p)$. Then there is $C_2>0$ depending only on $n, K_0, K_1$ such that the following holds: for any $\epsilon \in (0,\epsilon_1]$, $L\cap \mathscr B_r(p)$ splits into embedded connected components, where $r= \min\{ \delta/2, \epsilon (C_2(C_A+1))^{-1}\}$. Moreover, for each $v\in \mathbb U_p$, each embedded connected component $L_i$ of $L\cap \mathscr B_r(p)$, $z\in \widetilde L^i := \Upsilon_{p,v}^{-1}(L_i)$, up to a unitary action, $\widetilde L^i$ is the gradient graph of a function $u_i$ with $z=(0, 0)$,
\begin{itemize}
    \item [(i)] $u_i(0) = Du_i(0) = D^2u_i(0)$,
    \item [(ii)] $|D^2 u_i(x)|\le \epsilon$, and 
    \item [(iii)] $|D^3 u_i(x)|\le C_1 (C_A+1)$. 
\end{itemize} 
\end{prop}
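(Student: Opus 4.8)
The plan is to transport everything into a single Darboux chart, where the two preceding graphical lemmas apply, and then read off the three conclusions (i)--(iii).

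First I would fix a frame $v\in \mathbb U_p$ and pull $L$ back through the Darboux embedding, setting $\widetilde L := \Upsilon_{p,v}^{-1}(L\cap \mathscr B_\delta(p))$, an immersed Lagrangian submanifold of $(B^{2n}_\delta, \omega_0, \tilde h)$ with $\tilde h := \Upsilon_{p,v}^* h$. Proposition \ref{D coord. with estimates}(iii) guarantees that $\widetilde L$ is $\omega_0$-Lagrangian, and Proposition \ref{D coord. with estimates scaled} applied with $R=t=1$ shows $\tilde h = h^1_{p,v}$ obeys the structural bounds (\ref{eqn estimates on g^t}) on $B^{2n}_1\supset B^{2n}_\delta$; since $\delta\le 1$ and $K_0,K_1\le \epsilon_1<1/2$, the hypotheses of Lemmas \ref{lem graphical C^2 estimets} and \ref{lem graphical C^3 estimets} hold on $U=B^{2n}_\delta$. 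Because $\Upsilon_{p,v}$ is by construction an isometry from $(B^{2n}_\delta,\tilde h)$ onto $(\mathscr B_\delta(p),h)$, the second fundamental form of $\widetilde L$ with respect to $\tilde h$ is the pullback of $A$, hence still bounded in $C^0$ by $C_A$.

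With this setup I would invoke Lemma \ref{lem graphical C^2 estimets}, which provides $C_1=C_1(n,K_0,K_1)$ so that, for $r:=\min\{\delta/2,\ \epsilon/(C_2(C_A+1))\}$ with $C_2\ge C_1$ to be fixed, any embedded connected component of $\widetilde L$ through a point $z$ with $|z|<r$ is, on a coordinate ball about $z$, a graph over its tangent plane $T_z\widetilde L^i$. The Lagrangian condition upgrades this to a gradient graph $\Gamma_{u_i}$, and the lemma's normalization and bound are exactly (i) $u_i(0)=Du_i(0)=D^2u_i(0)=0$ and (ii) $\|D^2u_i\|_{C^0}\le \epsilon$. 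To present the graph over the standard plane $\mathbb R^n\times\{0\}$, as in the statement, I would apply a unitary $\gamma\in U(n)$ taking $T_z\widetilde L^i$ to $\mathbb R^n\times\{0\}$ (possible since $U(n)$ acts transitively on Lagrangian planes); by the covariance $\Upsilon_{p,v\circ\gamma}\cong\Upsilon_{p,v}\circ\gamma$ of Proposition \ref{D coord. with estimates}(ii) this is precisely the ``up to a unitary action'' in the statement, and the uniformity of $K_0,K_1,\dots$ over $\mathbb U$ keeps (\ref{eqn estimates on g^t}) intact after the frame change. Lemma \ref{lem graphical C^3 estimets} then yields $|D^3u_i(x)|\le C_2(|A(x)|+1)\le C_2(C_A+1)$, which is (iii), and the splitting of $L\cap\mathscr B_r(p)$ into embedded connected components follows from this uniformly graphical, hence locally embedded, structure together with properness of $\iota$, as in \cite[Proposition 2.2]{CW2}.

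The step requiring the most care is the bookkeeping of radii: I must ensure that for every $z\in\widetilde L$ with $|z|<r$ the coordinate ball on which Lemma \ref{lem graphical C^2 estimets} operates is contained in $B^{2n}_\delta$, which forces the cap $r\le\delta/2$ and a choice of $C_2$ no smaller than the graphical constant $C_1$ (so that the lemma's working radius does not exceed $\delta/2$). The only other subtlety is organizational, namely checking that the frame rotation used to standardize the tangent plane is compatible with the estimates on $\tilde h$; this is automatic because the constants $K_j$ in Proposition \ref{D coord. with estimates scaled} are independent of $(p,v)\in\mathbb U$.
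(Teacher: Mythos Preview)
Your proposal is correct and follows essentially the same approach as the paper: pull back through $\Upsilon_{p,v}$, apply Lemma~\ref{lem graphical C^2 estimets} to obtain the gradient-graph representation with (i) and (ii), rotate by a unitary to standardize the tangent plane, and apply Lemma~\ref{lem graphical C^3 estimets} for (iii). The only point where the paper is more explicit is the radius bookkeeping: rather than simply taking $C_2\ge C_1$, it sets $r_1=\epsilon/(C_1(C_A+1))$, $r_2=r_1/4$, applies Lemma~\ref{lem graphical C^2 estimets} first on $B^{2n}_{r_1}$ to get the embedded components and then again on $B^{2n}_{2r_2}(z)$ for each $z\in B^{2n}_{r_2}$, and fixes $C_2=4C_1$ so that the nested balls fit; this is exactly the ``bookkeeping of radii'' you flag as requiring care.
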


\begin{proof}
For any $r < \delta$ and $v\in \mathbb U_p$, let $\tilde L := \Upsilon_{p, v}^{-1} (L\cap \mathscr B_p(r))$. Then $\tilde L$ is a immersed Lagrangian submanifold in $B^{2n}_r$. For any $\epsilon \in (0, \epsilon_1]$, let $r_1(\epsilon) = \epsilon/ (C_1(C_A+1))$, where $C_1$ is defined in Lemma \ref{lem graphical C^2 estimets} and let $r_2 = r_1(\epsilon)/4$. Then for all $q\in B^{2n}_{r_2}$ we have $B^{2n}_{r_2}\subset  B^{2n}_{2r_1}(q) \subset B^{2n}_{r_1}$. By Lemma \ref{lem graphical C^2 estimets}, when $r = r_1(\epsilon)$, $\tilde L$ splits into finitely many embedded connected components. Let $\bar L^1, \cdots , \bar L^I $ be those components that intersects $B^{2n}_{r_2}$. For any $i = 1, \cdots, I$ and $z\in \bar L^i\cap B^{2n}_{r_2}$, apply Lemma \ref{lem graphical C^2 estimets} to the ball $B^{2n}_{2r_2}(z)$. Thus up to a unitary action, $\bar L^i \cap B^{2n}_{2r_2}$ is the gradient graph of a function $u_i$. (i), (ii), (iii) follows from Lemma \ref{lem graphical C^2 estimets} and Lemma \ref{lem graphical C^3 estimets}. Since $B^{2n}_{r_2} \subset B^{2n}_{2r_2}$, the Proposition is proved by choosing $C_2 = 4C_1$. 
\end{proof}

 The following $\epsilon$-regularity theorem is essential to the proof of Theorem \ref{main thm}. 

 \begin{thm} \label{thm epsilon regularity}
 There are positive numbers $\epsilon_0, C_{\epsilon_0}$ depending only on $K_0, K_1, K_2$ such that the following holds: if $L$ is an $n$-dimensional properly immersed HSL submanifold in a symplectic manifold $M$, $p\in L$ and
 \begin{align} \label{eqn L^n norm of A <epsilon_0}
 \int_{L \cap \mathscr B_{r_0}(p) } |A| ^n d\mu _L\le\epsilon_0, 
 \end{align}
where $r_0\le 1$. Then for all $0<\sigma \le r_0 $ and $y\in \mathscr B_{r_0 - \sigma}(p)\cap L$, 
 \begin{align}
 \sigma |A(y)| \le C_{\epsilon_0}. 
 \end{align}
 \end{thm}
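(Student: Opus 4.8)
The plan is to argue by contradiction through a curvature point-selection (blow-up) argument, converting the smallness of the $L^n$-curvature into a pointwise bound. Fix $\epsilon_0>0$ (to be chosen small, depending only on $K_0,K_1,K_2$) and suppose the conclusion fails for every constant. For $y$ in the Darboux ball, write $\rho(y)=|\Upsilon_{p,v}^{-1}(y)|$ for its radial Darboux coordinate and consider the scale-invariant quantity $\phi(y)=(r_0-\rho(y))|A(y)|$; since $y\in\mathscr B_{r_0-\sigma}(p)$ forces $\sigma<r_0-\rho(y)$, bounding $\sup_y\phi(y)$ is equivalent to the desired estimate. Failure for all constants yields a sequence of HSL immersions $L_k$, centers $p_k$, and radii $r_{0,k}\le1$ satisfying (\ref{eqn L^n norm of A <epsilon_0}) with $\sup_y\phi_k(y)\to\infty$.

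Next I would apply the standard point-selection lemma inside each $\mathscr B_{r_{0,k}}(p_k)$ to produce $\bar y_k$ with $\bar\Lambda_k:=(r_{0,k}-\rho_k(\bar y_k))|A_k(\bar y_k)|\to\infty$ and the doubling property $|A_k|\le 2|A_k(\bar y_k)|$ on the sub-ball $\mathscr B_{\lambda_k}(\bar y_k)$, where $\lambda_k=\tfrac12(r_{0,k}-\rho_k(\bar y_k))$. Setting $Q_k=|A_k(\bar y_k)|$, one has $Q_k\ge\bar\Lambda_k\to\infty$ and $\lambda_kQ_k=\tfrac12\bar\Lambda_k\to\infty$. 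I would then re-center the Darboux chart at $\bar y_k$ and rescale by $t_k=Q_k^{-1}\to0$: by Proposition \ref{D coord. with estimates scaled} the rescaled metrics $h^{t_k}_{\bar y_k,v_k}$ satisfy (\ref{eqn estimates on g^t}) with constants $K_jt_k^j\to0$, hence converge to $h_0$ in $C^\infty_{\mathrm{loc}}$. Since the Lagrangian and HSL properties are invariant under this dilation-and-rescaling, the rescaled immersions $\hat L_k$ are HSL in $(B^{2n}_{R_k},\omega_0,h^{t_k})$ with $R_k\sim\lambda_kQ_k\to\infty$, with $|\hat A_k|\le2$ and, comparing with the Euclidean form via Proposition \ref{prop A^i inequality for different h_i}, $|\hat A_k(0)|\to1$.

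By Proposition \ref{prop split into embedded components} and Lemma \ref{lem graphical C^2 estimets}, the embedded component of $\hat L_k$ through the origin is, after a unitary rotation, a gradient graph $\Gamma_{u_k}$ over a fixed ball $B^n_{1/2}$ with $u_k(0)=Du_k(0)=D^2u_k(0)=0$ and $\|D^2u_k\|_{C^0}\le\epsilon_1$. Recalling that for a gradient graph the second fundamental form is governed by $D^3u$ (Lemma \ref{lem graphical C^3 estimets}), the normalization reads $|D^3u_k(0)|\approx1$. Theorem \ref{Theorem higher order estimates for u satisfying HSL} then gives bounds $|D^mu_k|\le C(m,K_0,\dots,K_{m-3})$ on $B^n_{1/4}$ for all $m\ge3$, uniform in $k$; in particular a uniform $C^1$ bound on $\hat A_k$, that is, a uniform Lipschitz constant $L_0=L_0(K_0,K_1,K_2)$ for $|\hat A_k|$. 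Passing to a $C^\infty_{\mathrm{loc}}$ limit, $u_k\to u_\infty$ and $h^{t_k}\to h_0$, so $\Gamma_{u_\infty}$ is a Euclidean HSL with $|A_\infty(0)|=1$ and $|A_\infty|$ Lipschitz with constant $L_0$. Hence $|A_\infty|\ge\tfrac12$ on a ball of radius $\delta_0=1/(2L_0)$, and the area lower bound for a bounded graph forces $\int|A_\infty|^n\,d\mu\ge c_0(K_0,K_1,K_2)>0$. On the other hand $|A|^n\,d\mu$ is invariant under the dilation and the $L^n$-norms are computed with respect to metrics converging to $h_0$, so by (\ref{eqn L^n norm of A <epsilon_0}) the limit satisfies $\int|A_\infty|^n\,d\mu\le\epsilon_0$. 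Choosing $\epsilon_0=c_0/2$ gives a contradiction, and tracing the argument the resulting universal bound on $\sup_y\phi_k$ supplies $C_{\epsilon_0}$.

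The main obstacle is the \emph{uniformity} in this final step: I must convert ``$|A|=1$ at the center'' into a lower bound on the total curvature that beats a \emph{preassigned} $\epsilon_0$, which is possible only because Theorem \ref{Theorem higher order estimates for u satisfying HSL} yields a modulus of continuity for $A$ depending solely on $K_0,K_1,K_2$; without such uniform higher-order HSL estimates, the radius $\delta_0$ and the constant $c_0$ would depend on the limit and the scheme would fail. Secondary technical points, each manageable, are: executing the point-selection on a properly immersed (non-embedded) $L_k$ so that the doubling ball meets a single embedded sheet through $\bar y_k$ (via Proposition \ref{prop split into embedded components}); verifying invariance of the HSL condition under the Darboux dilation; and checking that the rescaled metrics satisfy the hypotheses of Theorem \ref{Theorem higher order estimates for u satisfying HSL} for $k$ large, which follows from Proposition \ref{D coord. with estimates scaled} together with (\ref{K_0, K_1 le epsilon_1}).
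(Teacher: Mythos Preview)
Your proposal is correct and follows essentially the same strategy as the paper: point-selection on $(r_0-\rho)|A|$, rescaling to normalize the curvature, graphical representation with $\|D^2u\|\le\epsilon_1$, the higher-order HSL estimate (Theorem \ref{Theorem higher order estimates for u satisfying HSL}) to get a uniform Lipschitz bound on $|\hat A|$, and then a lower bound on $\int|\hat A|^n$ near the origin contradicting the smallness hypothesis. The only difference is that you run a contradiction with a sequence and pass to a Euclidean limit, whereas the paper works directly with the single $L$: it rescales once, applies the $C^4$ estimate to the rescaled (non-limiting) metric $\tilde h$ via Proposition \ref{D coord. with estimates scaled}, and derives $R\le\tilde r(K_0,K_1,K_2)$ without any compactness step---a presentational rather than substantive distinction.
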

 
 \begin{proof}
Let $L$ be given by an immersion $\iota: N\to M$. Let $\Upsilon_{p,v}: B^{2n}_{r_0}\to M$ be given by Proposition \ref{D coord. with estimates}, where $v\in \mathbb U_p$. Let $z_0$ be the maximum of the function 
$$z\mapsto (r_0-|z|)^2 \max_{s \in \iota^{-1}(\Upsilon_{p,v} (z))}  |A(s)|^2$$ 
defined on $\Upsilon_{p,v}^{-1} (L\cap \mathscr B_{r_0}(p))$. Note that maximum exists since $\iota^{-1}(\Upsilon_{p,v} (z))$ is finite for each $z$. We assume this maximum is positive, or otherwise the result is trivial. So $|z_0|<r_0$. Let $s_0 \in \iota^{-1} (\Upsilon_{p,v} (z_0))$ such that 
$$|A(s_0)|^2 = \max_{s \in \iota^{-1}(\Upsilon_{p,v} (z_0))} |A(s)|^2.$$
For all $z\in B^{2n}_{\frac{r_0-|z_0|}{2}}(z_0)$ and $s\in \iota^{-1} (\Upsilon_{p,v} (z))$,  
\begin{align*}
|A(s)|^2 &\le \frac{(r_0-|z_0|)^2}{(r_0-|z|)^2} |A (s_0)|^2 \\
&\le \frac{(r_0-|z_0|)^2}{\left(\frac{r_0-|z_0|}{2}\right)^2} |A (s_0)|^2 \\
&\le 4 | A (s_0)|^2.
\end{align*}
Let $p_0 = \Upsilon_{p,v}(z_0)$. By (\ref{comparing geodesic ball and darboux ball}), $U:=\mathscr B_{\frac{r_0-|z_0|}{8}} (p_0) \subset \Upsilon_{p,v} (B^{2n}_{\frac{r_0 - |z_0|}{2}} (z_0))$. By Proposition \ref{prop split into embedded components}, there is $C_1>0$ such that $L \cap U\cap \mathscr B_{r} (p_0)$ splits into embedded connected components, where $r  =(C_1(2|A(s_0)|+1))^{-1}\epsilon_1$ and each of the components is HSL by Corollary \ref{cor HSL on each embedded connected component}. Moreover, let $L_i$ be a connected component so that $\iota^{-1}(L_i)$ contains $s_0$. Choose $v_0 \in \mathbb U_{p_0}$ so that $v_0 (\mathbb R^n \times \{0\}) = T_{p_0} L_i$. Then $\Upsilon_{p_0, v_0}^{-1} L_i$ is the gradient graph of a smooth function $u$ with $u(0) = Du(0) = D^2 (0) = 0$ and 
$$|D^2u(x)|\le \epsilon_1, \ \ \ |D^3u(x)|\le C_1 (2|A(s_0)|+1)$$  
for all $x$ in the domain of $u$. 

Let 
$$R= \frac{r_0-|z_0|}{8} |A(s_0)|, \  \ \ t=\frac{1}{ |A(s_0)|}$$
 and let 
$$t: B^{2n}_R \to B^{2n}_{\frac{r_0-|z_0|}{8}}$$
be the scaling $z\mapsto tz$. Then $\tilde L= (\Upsilon_{p_0, v_0} \circ t)^{-1}(L_i)$ is an immersed HSL in $(B^{2n}_R, \omega_0,\tilde h)$ given by 
$$\tilde \iota = t^{-1} \circ \iota: V \to B^{2n}_R$$ 
with metric $\tilde h = t^{-2} ( \Upsilon_{p_0, v_0} \circ t)^* h$. By the choice of $R$, the second fundamental form $\widetilde A$ of $\widetilde L$ satisfies $\|\widetilde A\|_0\le 2$ and $|\widetilde A(0)|=1$. Also, $\widetilde L$ is a gradient graph of a smooth function $\tilde u$ with 
$$|D^2\tilde u(x)|\le \epsilon_1, \ \ \ |D^3 \tilde u(x)|\le \frac{C_1(2|A(s_0)|+1)}{|A(s_0)|} \le 3C_1 $$ 
for all $x\in B^{2n}_{(3C_1)^{-1}}\cap B^{2n}_R$. Moreover,  
\begin{align}
\int_{\widetilde L} |\widetilde A|^n  d\mu_{\tilde L} \le \int_{B_{\frac{r_0-|z_0|}{8}}} |A|^n  d\mu <\epsilon_0
\end{align}
since the quantity is scale-invariant. Using Proposition \ref{prop Ck alpha estimates} with $m=4$ and (\ref{F, a bounds by K_j's})
$$|D^4\tilde u(x) |\le C(K_0, K_1, K_2)$$
for all $x\in B^{2n}_R$ with $|x|\le (4C_1)^{-1}$. This implies 
$$|\widetilde \nabla \widetilde A| \le C(K_0, K_1, K_2)$$
for all $x\in B^{2n}_R$ with $|x|\le (4C_1)^{-1}$. Since $|\widetilde A (0)| = 1$, one concludes 
$$ |\widetilde A (x)|\ge \frac 12$$
for all $x\in \widetilde L^1$ with $|x| \le \tilde r$, where $\tilde r = \tilde r (K_0, K_1, K_2)$. Now choose $\epsilon _0 = \omega^{-1}_n (4\tilde r)^n$. If $R> \tilde r$, then 
$$ \int_{B^{2n}_R} |\tilde A|^n d\mu_{\tilde L} \ge \int_{B^{2n}_{\tilde r}} |\tilde A|^n d\mu_{\tilde L} \ge \frac{1}{2^n} \mu_{\tilde L} (B^{2n}_{\tilde r})\ge \frac{1}{\omega_n (4\tilde r)^n} =\epsilon_0,$$
which is a contradiction. Thus 
$$R= \frac{r_0-|z_0|}{8}|A(s_0)| \le \tilde r,$$
which implies for all $z\in \mathscr B_{r_0}(p)$ and $s\in \iota^{-1}(\Upsilon_{p,v} (z))$, 
$$ |A(s)|\le \frac{8\tilde r}{r_0-|z|}.$$
 \end{proof}

 \section{Proof of Theorem \ref{main thm} for $n=1$} \label{pf of main thm n=1}
 In this section we prove Theorem \ref{main thm} for $n=1$. The general case is proved in the next section. We remark that it is necessary to split the proof into two cases: when $n\ge 2$ we use \cite[Corollary 4.5]{CW2}, which says that if $L$ is a Hamiltonian stationary Lagrangian in $M\setminus S$ and $S$ is finite, then $\overline L$ is also Hamiltonian stationary Lagrangian varifold in $M$. As pointed out in the introduction in \cite{CW2}, this does not hold when $n=1$.  
 
 \begin{proof}[Proof of Theorem \ref{main thm} for $n=1$] In this case, $(L_k)$ is a sequence of compact connected immersed curves in an oriented Riemannian surface $(M, h, J)$ and the Lagrangian condition is automatically satisfied by any smooth curve.  For each $k$, $L_k$ is given by an immersion $\gamma_k : \mathbb S^1 \to M$. We assume each $\gamma_k$ to be parameterized proportional to arc length.
 
 Since $J$ is compatible to $h$, one checks that an immersed curve $\gamma : \mathbb S^1 \to M$ in $(M, h)$ is Hamiltonian stationary if and only if its curvature is constant. We remark that if $h$ is generic then $(M, h)$ admits infinitely many embedded circles with constant curvatures \cite{Ye}. 
 
 For each $k\in \mathbb N$, let $\kappa_k$ be the (constant) curvature of $\gamma_k$. Let $\operatorname{Length}(L)$ denotes the length of the immersed curves $L$ defined in (\ref{eqn dfn of volume of L}). The condition (\ref{eqn bounded area and norm of A}) implies  
\begin{equation} \label{Bound assumption on length for n=1} 
\operatorname{Length} (\gamma_k)=\operatorname{Length}(L_k)\le C_V 
\end{equation}
and 
\begin{equation} \label{Bound assumption on total curvature for n=1} 
|\kappa_k| \operatorname{Length}(\gamma_k) =\int_{L_k} |\kappa_k| \le C_A. 
\end{equation}


Taking a subsequence of $(L_k)$ if necessary, we may assume that 
\begin{equation}\label{limit of length of each component}
\lim_{k\to \infty} \operatorname{Length} (\gamma_k) = \ell _\infty \in [0,C_V]
\end{equation}
and 
\begin{equation}\label{limit of total curvature of each component}
\lim_{k\to \infty} \kappa_k = \kappa _\infty \in [-\infty,\infty].
\end{equation}
By compactness of $M$, we may also assume that
\begin{equation}\label{limit of gamma j k at 0}
\lim_{k\to\infty} \gamma_k(0) = p, \ \ \ \lim_{k\to\infty} \gamma_k'(0) = v 
\end{equation}
for some $p \in M$ and $v \in T_{p} M$. 

If $\ell_\infty = 0$, then $(\gamma_k)$ converges to the point $p$. If $\ell_\infty >0$, then $\kappa_\infty \neq \pm \infty$ by (\ref{Bound assumption on total curvature for n=1}). Using (\ref{limit of gamma j k at 0}) and the smooth dependence of ODE on initial data and parameters \cite[Theorem 4.1]{Hartman}, $(\gamma_k)$ converges smoothly to an immersed curve $\gamma_\infty$ with $\gamma_\infty(0) = p$, $\gamma_\infty'(0) = v$ and constant curvature $\kappa_\infty$. This finishes the proof of Theorem \ref{main thm} when $n=1$.


\end{proof}

\section{Proof of Theorem \ref{main thm} for $n\ge 2$} \label{pf of main thm n>1}
First we prove a simple covering lemma by Darboux balls. 
\begin{prop} \label{prop covering theorem for darboux balls}
 Let $\lambda \in (0,1/4)$. There is $b$ depending only on $n$ and $\lambda$ such that the following holds. For any $r <1$, there is a finite open covering $\mathscr U$ of $M$ by Darboux balls $\mathscr B_r(a)$ so that (i) each member of $\mathscr U$ intersect with at most $b-1$ other members of $\mathscr U$, and (ii) the open cover $\{ \mathscr B_{\lambda r} (p) : \mathscr B_r(p)\in \mathscr U\}$ still covers $M$. 
 \end{prop}



\begin{proof} Let $\mathscr V= \{ \mathscr B_{\lambda r/8} (p_1), \cdots, \mathscr B_{\lambda r/8} (p_N)\}$ be a maximal collection of disjoint Darboux balls of radius $\lambda r /8$ in $M$. Then $\{ \mathscr B_{\lambda r} (p_1), \cdots, \mathscr B_{\lambda r} (p_N)\}$ covers $M$: to see this, assume $p\in M$ and is not in one of $\mathscr B_{\lambda r/8} (p_i)$. Since $\mathscr V$ is maximal, there is $i$ so that $\mathscr B_{\lambda r/8} (p) \cap \mathscr B_{\lambda r/8} (p_i)$ contains an element $y$. By (\ref{comparing geodesic ball and darboux ball}) and triangle inequality, 
$$ d(p, p_i) \le d(p, y) + d(y, p_i)< \frac{\lambda r}{4} + \frac{\lambda r}{4} = \frac{\lambda r}{2}.$$
Hence $p\in B^M_{\lambda r/2} (p_i) \subset \mathscr B_{\lambda r} (p_i)$ by (\ref{comparing geodesic ball and darboux ball}) again. Let $\mathscr U$ be the open cover 
$$ \mathscr U = \{ \mathscr B_r(p_1), \cdots, \mathscr B_r(p_N)\}.$$ Note that $\mathscr U$ satisfies (ii) by construction. To show (i), let $p_i$ be fixed and assume that $y_{i_j}$ lies in the intersection of $\mathscr B_r (p_i)$ and $\mathscr B_r(p_{i_j})$, for some $j=1, \cdots, k$. Then $d(p_i, p_{i_j})\le 4r$, or $p_{i_j} \in \mathscr B_{8r} (p_i)$ for all $i_j$. However, since $\mathscr V$ is a collection of disjoint Darboux balls with radius $\lambda r/8$, one has $d(p_{i_j}, p_{i_l}) \ge \lambda r/8$. Let $v_i \in \mathbb U_{p_i}$ be fixed and let $z_j = \Upsilon_{p_i, v_i}^{-1} (p_{i_j})$. Then $\{z_1, \cdots, z_k\}$ be a collection of points in $B^{2n}_{8r}$ so that $|z_i-z_j| \ge \lambda r/16$ whenever $i\neq j$. This implies $k \le b$, where $b$ depends only on $n$, $\lambda$ but not on $r$. This finishes the proof of (i). 
\end{proof}

\begin{proof}[Proof of Theorem \ref{main thm} for $n\ge 2$] Choose $\lambda = \frac{\epsilon_1}{27C_2C_{\epsilon_0}}$, where $\epsilon_1$, $C_2$ and $C_{\epsilon_0}$ are defined in (\ref{eqn dfn of epsilon_1}), Proposition \ref{prop split into embedded components} and Theorem \ref{thm epsilon regularity} respectively. By Proposition \ref{prop covering theorem for darboux balls}, there is a constant $b = b(n, \lambda)$ so that the following holds: for any $m\in \mathbb N$, there is a finite open cover 
$$\mathscr U_m = \{\mathscr B_{2^{-m}} (\tilde p^m_1), \cdots,  \mathscr B_{2^{-m}} (\tilde p^m_{j(m)})\}$$
of $M$ by Darboux balls of radius $2^{-m}$, so that
\begin{enumerate}
\item [(i)] each elements in $\mathscr U$ intersects at most $b-1$ other members in $\mathscr U$, and
\item [(ii)] the collection 
$$\mathscr U_{m, \lambda} = \{\mathscr B_{\lambda 2^{-m}} (\tilde p^m_1), \cdots,  \mathscr B_{\lambda 2^{-m}} (\tilde p^m_{j(m)})\}$$
still covers $M$.
\end{enumerate} 
The first condition implies that for each $k \in \mathbb N$,
\begin{align} \label{sum of L n norm of |A| over balls} \sum_{j=1}^{j(m)} \int_{\mathscr B_{2^{-m}} (\tilde p^m_j) \cap L_k }|A_k |^n d\mu_k < b \int_{L_k} |A_k|^n d\mu_k < bC_A,
\end{align}
where $C_A$ is give in Theorem \ref{main thm} and $\mu_k := \mu_{L_k}$. For any fixed $k$ and $m$,  let $J_{k,m} =\{ p^m_{1,k}, \cdots, p^m_{\ell, k}\}$ be the subset of $\{\tilde p^m_1, \cdots, \tilde p^m_{j(m)}\}$ so that 
$$ \int_{\mathscr B_{2^{-m}} (p^m_{j, k}) \cap L_k} |A_k |^n d\mu_k \ge \epsilon_0.$$
Here $\epsilon_0$ is given in Theorem \ref{thm epsilon regularity} and $\ell = \ell(m, k)$ is less than $bC_A/\epsilon_0$ by (\ref{sum of L n norm of |A| over balls}). Taking a diagonal subsequence of $(L_k)$ if necessary, we may assume that $\ell(k, m) = \ell(m)$ and $J_{k,m} = J_m$ are both independent of $k$ for all $m\in \mathbb N$. Write 
$$ J_m = \{ p^m_1, \cdots, p^m_{\ell(m)}\}.$$
Using $\ell (m) <bC_A/\epsilon_0$, there is a subsequence $(m_i)$ such that $\ell (m_i) = \ell$ for all $i\in \mathbb N$. Using the compactness of $M$ and taking a further subsequence of $(m_i)$ if necessary, we may assume that 
$$ \lim_{i \to \infty} p^{m_i}_j = p_j$$
for all $j =1, \cdots, \ell$. 

Let $S = \{ p_1, \cdots, p_\ell\}$ and $r_i = 2^{-m_i}$. Fix any $p \in \{ \tilde p^{m_i}_1, \cdots , \tilde p^{m_i}_{j(m)}\} \setminus J_{m_i}$. By definition of $J_{m_i}$ this implies  

$$ \int_{\mathscr B_{r_i} (p) \cap L_k} |A_k|^n d\mu_k < \epsilon_0$$
for all $k\in \mathbb N$. By Theorem \ref{thm epsilon regularity},
\begin{equation} \label{eqn |A| le C/r in B_r}
|A_k (q)| \le 2C_{\epsilon_0}/r_i,  \ \ \ \forall k \in \mathbb N \text{ and } q\in\mathscr B_{r_i/2} (p). 
\end{equation}
Setting $C_A = 2C_{\epsilon_0}/r_i$ and note that 
$$ \frac{\epsilon_1}{C_2 (C_A +1)} = \frac{r_i\epsilon_1 }{C_2 (2C_{\epsilon_0}+r_i)} \ge \frac{\epsilon_1}{3C_2C_{\epsilon_0}}r_i = 9\lambda r_i.$$
By Proposition \ref{prop split into embedded components}, for each $k\in \mathbb N$, $\mathscr B_{8\lambda r_i} (p)\cap L_k$ splits into finitely many embedded connected components 
$$\mathscr B_{8\lambda r_i} (p)\cap L_k = L_k^1\cup \cdots \cup L_k^{I_k}.$$
Let $v\in \mathbb U_p$ be fixed and for each $a=1, \cdots, I_k$, denote $\tilde L_k^a := \Upsilon_{p,v}^{-1} (L^a_k)$. 
\begin{lem} \label{lem M_i existence}
For each $i\in \mathbb N$, there is $M_i\in \mathbb N$ depending on $i$, $\lambda$, $n$ and $C_V$ in (\ref{eqn bounded area and norm of A}) only so that $\mathscr B_{8\lambda r_i} (p)\cap L_k$ has at most $M_i$ embedded connected components which intersect $\mathscr B_{\lambda r_i}(p)$. 
\end{lem}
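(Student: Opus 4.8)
The plan is to use the uniform curvature estimate already available near $p$ to represent each relevant embedded component as a gradient graph with a definite base radius, extract from this a uniform lower bound on the area each such component occupies inside $\mathscr B_{8\lambda r_i}(p)$, and then conclude by dividing the global area bound $\operatorname{Vol}(L_k)\le C_V$ by this lower bound.

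First I would record that since $\lambda<1/4$ is small (in particular $8\lambda<1/2$), one has $\mathscr B_{8\lambda r_i}(p)\subset \mathscr B_{r_i/2}(p)$, so \eqref{eqn |A| le C/r in B_r} gives $|A_k|\le 2C_{\epsilon_0}/r_i$ throughout $\mathscr B_{8\lambda r_i}(p)$. Thus Proposition \ref{prop split into embedded components} applies with $C_A$ replaced by $2C_{\epsilon_0}/r_i$, and exactly as in the displayed computation preceding the lemma the graphical radius satisfies $\epsilon_1\big(C_2(2C_{\epsilon_0}/r_i+1)\big)^{-1}\ge 9\lambda r_i$. For each embedded connected component $L_k^a$ of $\mathscr B_{8\lambda r_i}(p)\cap L_k$ meeting $\mathscr B_{\lambda r_i}(p)$, I would pick $q_a\in L_k^a\cap \mathscr B_{\lambda r_i}(p)$ together with a frame $v_a\in\mathbb U_{q_a}$ adapted to $T_{q_a}L_k^a$, and apply Proposition \ref{prop split into embedded components} in the chart $\Upsilon_{q_a,v_a}$ to write $L_k^a$ as the gradient graph of a function $u_a$ over a disk $B^n_\rho$ with $\rho\ge c(n)\lambda r_i$, satisfying $Du_a(0)=0$ and $\|D^2u_a\|\le\epsilon_1$.

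Since a gradient graph has area at least that of its base disk, this yields $\operatorname{Vol}\big(L_k^a\cap \mathscr B_{8\lambda r_i}(p)\big)\ge \omega_n(c\lambda r_i)^n=:\delta_i$, once I check that the graph piece over $B^n_\rho$ is genuinely contained in $\mathscr B_{8\lambda r_i}(p)$. This containment follows from $q_a\in\mathscr B_{\lambda r_i}(p)$, the $C^1$ smallness $|Du_a|\le\epsilon_1\rho$ coming from $Du_a(0)=0$ and the $D^2u_a$ bound (which keeps the graph within geodesic distance $\approx\rho$ of $q_a$), and the comparison \eqref{comparing geodesic ball and darboux ball} between geodesic and Darboux balls, after shrinking $\rho$ by a fixed dimensional factor if necessary. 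Finally, distinct embedded connected components arise from disjoint open subsets of $N_k$, so their area contributions add up; hence the number of components meeting $\mathscr B_{\lambda r_i}(p)$, multiplied by $\delta_i$, is at most $\operatorname{Vol}\big(L_k\cap\mathscr B_{8\lambda r_i}(p)\big)\le \operatorname{Vol}(L_k)\le C_V$. Setting $M_i=\lfloor C_V/\delta_i\rfloor+1$ proves the claim, and because $\delta_i=\omega_n(c\lambda r_i)^n$ with $r_i=2^{-m_i}$, this bound depends only on $i$, $\lambda$, $n$ and $C_V$.

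The main obstacle I anticipate is the bookkeeping of containment and disjointness: verifying that each graphical piece really sits inside the larger ball $\mathscr B_{8\lambda r_i}(p)$ and that pieces belonging to different components are never double-counted, so that the sum of their areas is legitimately dominated by $\operatorname{Vol}(L_k)$. This requires tracking the Darboux-ball inclusions together with the $C^1$ smallness extracted from the $D^2u_a$ bound. An alternative route to the lower area bound $\delta_i$ would be to invoke the monotonicity formula for the immersions $L_k^a$, whose mean curvature is bounded by $2C_{\epsilon_0}/r_i$; I would nonetheless prefer the graphical estimate from Proposition \ref{prop split into embedded components}, since it is already in hand and avoids carrying the mean curvature constant through a monotonicity argument.
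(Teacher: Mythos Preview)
Your proposal is correct and follows essentially the same line as the paper: represent each embedded component meeting $\mathscr B_{\lambda r_i}(p)$ as a gradient graph over a disk of radius comparable to $\lambda r_i$, deduce a uniform lower area bound $\gtrsim (\lambda r_i)^n$ via \eqref{eqn estimates on g^t}, and divide the global volume bound $C_V$ by this quantity. The only cosmetic difference is that the paper stays in the single Darboux chart $\Upsilon_{p,v}$ and applies a unitary rotation plus translation there, whereas you switch to a fresh chart $\Upsilon_{q_a,v_a}$ at each component; both yield the same estimate.
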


\begin{proof}[Proof of Lemma \ref{lem M_i existence}]
Let $L^a_k$ be one of the components $\mathscr B_{8\lambda r_i} (p)\cap L_k$ which intersects $\mathscr B_{\lambda r_i}(p)$. Let $z^a_k \in B^{2n}_{\lambda r_i}\cap \tilde L_k^a$. Then there is a unitary matrix $A^a_k$ and a smooth function $u^a_k$ defined on $U^a_k$ so that 
$$ (\Upsilon^a_k)^{-1} (\tilde L_k^a \cap B^{2n}_{8\lambda r_i} (z^a_k))= \{ (x, Du^a_k(x)) : x\in U^a_k\}, $$
where $\Upsilon^a_k (z) = A^a_k z + z^a_k$. Using (\ref{eqn estimates on g^t}), we obtain $\mu(L^a_k) \ge C(n) (\lambda r_i)^n$ for some dimensional constant $C(n)$. Together with the assumption on the area bound in (\ref{eqn bounded area and norm of A}), the lemma is proved. 
\end{proof}

By Lemma \ref{lem M_i existence} and taking a diagonal subseqeunce of $(L_k)$ if necessary, we may assume that for any $i\in \mathbb N$ and $p \in \{ \tilde p^{m_i}_1, \cdots, \tilde p^{m_i}_{j(m)}\} \setminus J_{m_i}$, there is an integer $n_i(p)$ such that $\mathscr B_{8\lambda r_i} (p) \cap L_k$ has exactly $n_{i}(p)$ connected components which intersect $\mathscr B_{\lambda r_i} (p)$, where $n_{i}(p) \le M_i$. Let $L^1_k, \cdots, L^{n_i(p)}_k$ be such connected components. 

Let $z^a_k$, $A^a_k$, $\Upsilon^a_k$ and $u^a_k : U^a_k\to \mathbb R$ be defined as in the proof of Lemma \ref{lem M_i existence}, where $u=u^a_k$ satisfies (\ref{eqn u (0) = Du(0) = D^2 u(0) = 0}) and (\ref{eqn |D^2 u (x)| le epsilon}) with $\epsilon = \epsilon_1$. By Theorem \ref{Theorem higher order estimates for u satisfying HSL}, for any $m\ge 3$, there are $\overline C_m$ depending on $r_i,\lambda, m, K_0, \cdots, K_{m-3}$ so that 
\begin{align} \label{higher order derivative bound of u}
    |D^m u^a_k (x)|\le \overline C_m, \ \ \text{ for all } x\in B^n_{4 \lambda r_i}.
\end{align}

Taking a diagonal subsequence of $(L^k)_{k=1}^\infty$ if necessary, we may assume that for each $i\in \mathbb N$ and $p$, 
\begin{enumerate}
\item the sequence of points $(z^a_k)_{k=1}^\infty$ converges to $ z^a \in \overline {B^{2n}_{\lambda r_i}}$ as $k\to \infty$, 
\item the sequence of unitary matrices $(A^a_k)_{k=1}^\infty$ converges to some $A^a \in U(n)$ as $k\to \infty$, and
\item the sequence of functions $(u^a_k)_{k=1}^\infty$ converges in $C^m(B^n_{3\lambda r_i})$ for all $m\in \mathbb N$ to a smooth function $u^a : B^n _r \to \mathbb R$. This is possible by the higher order estimates (\ref{higher order derivative bound of u}). 
\end{enumerate}
Then the sequence of immersions $(\Upsilon^a_k \circ \Gamma_{u^a_k})_k$ converges to the immersion $\Upsilon^a \circ \Gamma_{u^a}$, where $\Upsilon^a(z) = A^az+z^a$. In particular, there is $\tilde u^a_k$ defined on $B^n_{2\lambda r_i}$ so that $\tilde L^a_k$ is locally given by the immersion $\Upsilon^a \circ \Gamma_{\tilde u^a_k}$.

Next we define the Lagrangian integral $n$-varifold $L$ on $M\setminus S$. Let 
$$\mathscr B = \{\mathscr B_{\lambda r_i} (p) : i\in \mathbb N, \ p \in \{\tilde p^{m_i}_1, \cdots, \tilde p^{m_i}_{j(m_i)}\}\setminus J_{m_i}\}.$$
Clearly $\mathscr B$ is an open cover of $M\setminus S$. For each $K\in \mathbb N$, let 
$$\mathscr B_K = \{ \mathscr B_{\lambda r_i} (p) \in \mathscr B: \mathscr B_{\lambda r_i} (p) \cap S_{1/K} = \emptyset\},$$
where for any $\rho >0$ we set $S_{\rho} = \{ q\in M : d^M(q, s)<\rho  \text{ for some } s\in S\}$. Then $\tilde{\mathscr B} = \cup_K \mathscr B_K$ is an open cover of $M\setminus S$. For each $q\in M\setminus S$, then $q\in \mathscr B_{\lambda r_i} (p)$ for some  $U=\mathscr B_{\lambda r_i} (p)\in \tilde{\mathscr B}$. Define $|L|$ on $\mathscr B_{\lambda r_i} (p)$ by 
$$|L|= |L^1|+ \cdots + |L^{n_i(p)}|,$$ 
where for each $a = 1, \cdots, n_i(p)$, $L^a$ is the limits of $(L^a_k\cap \mathscr B_{\lambda r_i} (p))_{k=1}^\infty$ constructed in the previous paragraph. Since $z^a \in B^{2n}_{\lambda r_i}$ for each $a =1, \cdots, n_i(p)$, $B^{2n}_{\lambda r_i}\subset B^{2n}_{2\lambda r_i} (z^a)$. Hence $(L^a_k\cap \mathscr B_{\lambda r_i} (p))_{k=1}^\infty$ converges graphically in $\mathscr B_{\lambda r_i}(p)$ to $L\cap U$ in $C^\infty$-topology. Since $q\in M\setminus S$ is arbitrary, $(L_k)_{k=1}^\infty$ converges in $M\setminus S$ smoothly locally graphically to $L$. Since the convergence is locally smooth and each $L_k$ is HSL, $L$ is also HSL on $M\setminus S$. By \cite[Corollary 4.4]{CW2}, $\overline L$ admits a structure of an $n$-integral Lagrangian varifold so that (\ref{EL for HSL}) holds (although Theorem 4.3 and Corollary 4.4 in \cite{CW2} are stated only for Lagrangian immersions in $\mathbb R^{2n}$, by isometrically embed $(M, h)$ into some Euclidean space, the proofs work for immersions in $(M, \omega, J, h)$). Connectedness of $\overline L$ can be shown as in the proof of \cite[Theorem 1.1]{CW2}. 

Lastly, it remains to show (\ref{volume identity}). By the Nash embedding theorem, one can assume that $(M, h)$ is an embedded submanifold of $\mathbb R^{n+K}$ for some natural number $K$. Let $L$ be an immmersed submanifold $M$ and let $\tilde A, \tilde H$ be the second fundamental form and mean curvature vector of $L$ respectively as a submanifold in $\mathbb R^{n+K}$. Then $\tilde A = A + A^M$, where $A^M$ is the second fundamental form of $M$ in $\mathbb R^{n+K}$. Together with the fact that $|A^M| \le C(M)$ since $M$ is compact, we obtain
$$ \left(\int_L |\tilde H|^n d\mu_L \right)^{1/n}\le \left(\int_L |H|^n d\mu_L \right)^{1/n}+ C(M) \mu(L). $$
Thus we can apply \cite[Proposition 4.1]{CW2} to $L = L_k$ and conclude that
\begin{equation} \label{volume upper estimates}
    \operatorname{Vol}( B_\rho (p)\cap L_k) \le C (|\ln \rho| + 1)^n \rho ^n
\end{equation}
for all $p\in M$ and $\rho >0$ small enough, here $C$ depends only on $M$ and $C_V$, $C_A$ in Theorem \ref{main thm}. For any $\rho >0$, by (\ref{volume upper estimates}) we have 
\begin{equation} \label{Volume inequality for L_k, L_k - S_rho}
\operatorname{Vol} (L_k)-C(|\ln \rho| +1)^n \rho^n \le \operatorname{Vol} (L_k \setminus S_\rho) \le \operatorname{Vol} (L_k). \end{equation}
Since $(L_k)$ converges smoothly graphically to $L$ away from $S_\rho$, taking $k\to\infty$ in (\ref{Volume inequality for L_k, L_k - S_rho}) gives 
$$\lim_{k\to\infty} \operatorname{Vol} (L_k)-C(|\ln \rho| +1)^n \rho^n \le \operatorname{Vol} (L\setminus  S_\rho) \le\lim_{k\to \infty} \operatorname{Vol} (L_k).$$
Thus (\ref{volume identity}) is shown by taking $\rho \to 0$ and this finishes the proof of Theorem \ref{main thm}. 

 \end{proof}

\bibliographystyle{abbrv}

\begin{thebibliography}{10}
\bibitem{Allard}
W.K. Allard: 
{\sl On the first variation of a varifold}. Ann. Math. (2) {\bf 95}, 417-491 (1972).

 
\bibitem{BCW}
A. Bhattachaya; J. Chen and M. Warren:
{\sl Regularity of Hamiltonian stationary Lagrangian equations in symplectic manifolds}, arxiv:2108.00325v1.

\bibitem{B}
P. Breuning:
{\sl Immersions with Bounded Second Fundamental Form}, J. Geom. Anal. {\bf 25}, No. 2, 1344-1386 (2015).

\bibitem{BC}
A. Butscher and J. Corvino:
{\sl Hamiltonian stationary tori in K{\"a}hler manifolds}, Calc. Var. Partial Differ. Equ. {\bf 45}, No. 1-2  (2012), 63-100.


\bibitem{CLi}
J. Chen and Y. Li: 
{\sl Bubble tree of branched conformal immersions and applications to the Willmore functional}, Amer. J. Math. Vol. {\bf 136}, No.4 (2014).

\bibitem{CMa}
J. Chen and J.M.S. Ma,
{\sl On the compactness of Hamiltonian stationary Lagrangian surfaces in Kahler surfaces}, Calc. Var. Partial Differ. Equ. {\bf 60}, No. 2, Paper No. 75, 23 p. (2021).


\bibitem{CW1}
J. Chen and M. Warren:
{\sl On the regularity of Hamiltonian stationary Lagrangian submanifolds}. Adv. Math., ({\bf 343}), 316-352.

\bibitem{CW2}
J. Chen and M. Warren:
{\sl Compactification of the space of Hamiltonian Stationary Lagrangian submanifolds with bounded total extrinsic curvature and volume}, arXiv:1901.03316v1 to appear in J. Diff. Geom. 



\bibitem{CS}
H. Choi and R. Schoen:
{\sl The space of minimal embeddings of a surface into a three dimensional manifold of positive Ricci curvature}, Invent. Math., Vol. {\bf 81}, No. 3 , 387-394 (1985).









\bibitem{Hartman}
P. Hartman: 
{\sl Ordinary Differential Equations}, second edition. Classics in Applied Mathematics, CL{\bf 38}, (2002).

\bibitem{JLS}
D. Joyce; Y.-I., Lee and R. Schoen: 
{\sl On the existence of Hamiltonian stationary Lagrangian submanifolds in symplectic manifolds}, Am. J. Math. {\bf 133}, No. 4, 1067-1092 (2011).

\bibitem{Langer}
J. Langer: {\sl A compactness theorem for surfaces with Lp-bounded second fundamental form}, Math. Ann. {\bf 270} (1985), no. 2, 223-234.

\bibitem{Lee}
Y.-I. Lee:
{\sl The existence of Hamiltonian stationary Lagrangian tori in K{\"a}hler manifolds of any dimension}, Calc. Var. Partial Differ. Equ. {\bf 45}, No. 1-2 (2012), 231-251.

\bibitem{LR}
E. Legendre and Y. Rollin:
{\sl Hamiltonian stationary Lagrangian fibrations}, J. Symplectic Geom. {\bf 17}, No. 3, 753-791 (2019).

\bibitem{Oh1}
Y.-G. Oh: 
{\sl Second variation and stabilities of minimal Lagrangian submanifolds in K{\"a}hler manifolds}, Invent. Math. {\bf 101}, No. 2 (1990), 501-519.

\bibitem{Oh2}
Y.-G. Oh: 
{\sl Volume minimization of Lagrangian submanifolds under Hamiltonian deformations}, Math. Z. {\bf 212}, No. 2 (1993), 175-192.

\bibitem{SW}
R. Schoen and J. Wolfson: 
{\sl Minimizing Area Among Lagrangian Surfaces: The Mapping Problem}, J. Differ. Geom. {\bf 58}, No. 1 (2001), 1-86. 




\bibitem{Sharp}
B. Sharp: 
{\sl Compactness of minimal hypersurfaces with bounded index}. J. Differ. Geom. {\bf 106} (2) 317-339, (2017).

\bibitem{TY}
R. Thomas and S.T. Yau: 
{\sl Special Lagrangians, stable bundles and mean curvature flow}, Comm. Anal. Geom. {\bf 10}, No. 5 (2002), 1075-1113.

\bibitem{Ye}
R. Ye:
{\sl Foliation by constant mean curvature spheres}. Pac. J. Math. {\bf 147}, No. 2, 381-396 (1991).

\end{thebibliography}

\end{document}